\newtheorem{teo}{Theorem}[section]
\newtheorem{lema}{Lemma}[section]
\newtheorem{defi}{Definition}[section]
\newtheorem{rem}{Remark}[section]
\newcommand{\om}[1][a,b]{\ensuremath{\mathcal{L}_{#1}}}
\newcommand{\km}[1][a,b]{\ensuremath{\mathcal{P}_{#1}}}
\newcommand{\kl}[1][a]{\ensuremath{P_{#1}}}
\newcommand{\kf}[1][b]{\ensuremath{P_{#1}^s}}
\newcommand{\of}[1][s]{(-\Delta)^#1}
\newcommand{\lp}[1][p]{L^#1}
\DeclareRobustCommand{\rchi}{{\mathpalette\irchi\relax}}
\newcommand{\irchi}[2]{\raisebox{\depth}{$#1\chi$}} 
\title[Fujita exponent and blow-up rate for a mixed operator]{Fujita exponent and blow-up rate for a mixed local and nonlocal heat equation}
  \author[L. M. Del Pezzo]{Leandro M. Del Pezzo}
	\address{Leandro M. Del Pezzo \hfill\break\indent
            IESTA --Facultad de Ciencias Económicas y de Administración
            \hfill\break\indent Universidad de la República
           \hfill\break\indent  Av. Gonzalo Ramírez 1926, 11200 Montevideo,
            \hfill\break\indent Departamento de Montevideo - Uruguay .}
			\email{leandro.delpezzo@fcea.edu.uy}
\author[R. Ferreira]{Ra\'ul Ferreira}
                \address{Ra\'ul Ferreira\hfill\break\indent
                Departamento de An\'alisis Matem\'atico y Matem\'atica Aplicada,
                \hfill\break\indent Fac. de C.C. Qu\'{\i}micas, U.
                Complutense de Madrid,
                \hfill\break\indent 28040,  Madrid, Spain.}
                \email{raul\_ferreira@mat.ucm.es}
\begin{document}

\begin{abstract}
	In this paper we consider the blow-up problem for a mixed
	local-nonlocal diffusion operator,
	$$
	u_t=a\Delta u -b\of u+u^p.
	$$
	We show that the Fujita exponent is given by the nonlocal part,
	$p_F=1+2s/N$. We also determinate, in some cases, the blow-up rate.
\end{abstract}

\maketitle

\section{Introduction}

	The main goal of this paper is to determine the Fujita exponent for the problem:
    \begin{equation}\label{fp}
       \begin{cases}
          u_t(x,t) - \om u(x,t) = u^p & \text{in } \mathbb{R}^N \times
          (0, T),\\
            u(x,0) = u_0(x) & \text{in } \mathbb{R}^N,
        \end{cases}
    \end{equation}
    where \( p > 1 \), \( u_0(x) \ge 0 \), \( T > 0 \) is the maximal time
    of existence, and the diffusion operator
    \( \om \) is the sum of a local operator (the Laplacian) and a
    nonlocal operator (the fractional Laplacian)
    \[
        \om u = a\Delta u - b(-\Delta)^s u.
    \]

	\medskip
	
    When \( b = 0 \), we recover the classical Fujita equation. It is
    well known, see \cite{Fujita, Weissler}, that there are two critical
    exponents: the global existence exponent \( p_0 = 1 \) and the so-called \emph{Fujita} exponent
    \[
        p_F \coloneqq 1 + \frac{2}{N},
    \]
    such that:
    \begin{enumerate}[(i)]
        \item All solutions exist globally in time if \( p \le p_0 \).
        \item All solutions blow up in finite time if
            \( p_0 < p \le p_F \).
        \item There exist both global and blow-up solutions if
            \( p > p_F \).
    \end{enumerate}

    Based on these results, extensive research has been conducted for
    different local diffusion operators, such as the $p$-Laplacian or the
    Porous Medium Equation. See, for instance, the review books
    \cite{QuittnerSouplet07, GKMS}.

    \medskip

    In the case \( a = 0 \), we obtain the semilinear fractional heat
    equation with critical exponents \( p_0 = 1 \) and
    \[
        p_F^s \coloneqq 1 + \frac{2s}{N},
    \]
    see \cite{Sugitani, NagasawaSirao, GuedaKirane}. The method of proof
    in \cite{NagasawaSirao} is probabilistic, while in \cite{Sugitani,
    GuedaKirane}, the approach is analytic. We also refer to
    \cite{MelianQuiros, Alfaro} for cases involving integrable kernels,
    where the Fujita exponent depends on the decay of the kernel at
    infinity.

    As stated earlier, our interest is in determining the Fujita exponent
    for positive parameters \( a \) and \( b \). To do this, we study two
    different problems related to \( \om u \). First, the linear problem
    \begin{equation}
        \label{cp}
        \begin{cases}
            u_t(x,t) - \om u(x,t) = 0 & \text{in } \mathbb{R}^N \times
            (0, \infty),\\
            u(x,0) = u_0(x) & \text{in } \mathbb{R}^N,
        \end{cases}
    \end{equation}
    and semilinear problem in a bounded smooth  domain (that is  the Dirichlet problem)
    \begin{equation}\label{Dirichlet}
        \begin{cases}
            u_t(x,t) - \om u(x,t) = u^p(x,t) & \text{in }
            \Omega \times (0, T),\\
            u(x,t) = 0 & \text{in } (\mathbb{R}^N \setminus \Omega)
            \times (0, T),\\
            u(x,0) = u_0(x) & \text{in } \Omega.
        \end{cases}
    \end{equation}

    The operators \( \om \) naturally emerge from the amalgamation of two
    stochastic processes operating at distinct scales: a classical random
    walk and a L\'evy flight. Essentially, when a particle can follow
    either process based on certain probabilities, the resulting limit
    diffusion equation can be characterized by an operator of the form
    \( \om \). For an exhaustive examination of this phenomenon and its
    implications, refer to the appendix in \cite{DipierroProiettiLippi}.

    The mixed operator provides a framework to analyze the disparate
    effects of “local” versus “nonlocal” diffusions in practical
    scenarios, such as examining how different forms of “regional” or
    “global” restrictions could mitigate the spread of a pandemic, as
    discussed in \cite{Epstein}. Traditional applications extend to
    domains such as heat transport in magnetized plasmas, elaborated in
    \cite{Blazevski}.

    The mathematical exploration of operators with varying orders is not
    novel. Existing literature encompasses findings pertaining to various
    scenarios; see, for instance, \cite{Jakobsen1, Jakobsen2,
    BarlesImbert, Biswas, Ciomaga, delallave, Chen1, Chen2, Mimica}.

    In the context of the Fujita problem, a significant challenge when
    dealing with the mixed operator lies in the loss of the scaling
    property. In this work, we develop new techniques to address and
    overcome this difficulty.
%
%
%
%
%

    \subsection{Main Results}

    Using the Fourier transform, we show that if
    \( u_0 \in L^1(\mathbb{R}^N) \), then there is a solution to the
    linear problem \eqref{cp}, and for large times, it behaves like the
    fractional heat equation.

    \begin{teo}\label{cpconvasym}
        Let \( u_0 \in L^1(\mathbb{R}^N) \cap L^\infty(\mathbb{R}^N) \)
        be a non-negative and non-trivial function
        and \( M = \int_{\mathbb{R}^N} u_0(x)\, dx \) be its mass. There
        exists a unique solution to \eqref{cp} given by convolution with
        the heat kernel associated with \( \om \). Moreover, it satisfies
        \[
            \lim_{t \to \infty} t^{\frac{N}{2s}} \|u(\cdot,t)
            - M\kf(\cdot,t)\|_{L^\infty(\mathbb{R}^N)} = 0,
        \]
        where \( \kf \) is the heat kernel associated with the nonlocal
        part \( (-\Delta)^s \).
    \end{teo}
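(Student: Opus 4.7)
\noindent\textbf{Proof plan for Theorem \ref{cpconvasym}.} I would work entirely on the Fourier side, exploiting the multiplicative factorization
\[
e^{-t(a|\xi|^2 + b|\xi|^{2s})} = e^{-ta|\xi|^2}\, e^{-tb|\xi|^{2s}}
\]
of the symbol of $e^{t\om}$, and then extract the rate $t^{-N/2s}$ by a rescaling argument combined with dominated convergence.

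For the existence half, applying the spatial Fourier transform to \eqref{cp} reduces the equation to the pointwise ODE $\partial_t \hat u(\xi,t) = -(a|\xi|^2+b|\xi|^{2s})\hat u(\xi,t)$, whose solution is $\hat u(\xi,t) = e^{-t(a|\xi|^2+b|\xi|^{2s})}\hat u_0(\xi)$. Inverting produces the representation $u(\cdot,t)=\km(\cdot,t)*u_0$ with $\km = \kl*\kf$, a nonnegative probability density for every $t>0$. Uniqueness then follows either directly from the Fourier formula or by the standard comparison argument applied within the class of bounded solutions.

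For the convergence assertion, set $v(x,t):=u(x,t)-M\kf(x,t)$. Using $M=\hat u_0(0)$ and adding and subtracting $M\,e^{-t(a|\xi|^2+b|\xi|^{2s})}$, the Fourier side of $v$ splits as
\[
\hat v(\xi,t) = e^{-t(a|\xi|^2+b|\xi|^{2s})}\bigl(\hat u_0(\xi)-M\bigr) + M\, e^{-tb|\xi|^{2s}}\bigl(e^{-ta|\xi|^2}-1\bigr).
\]
Since $\hat v(\cdot,t)\in L^1(\mathbb{R}^N)$ for every $t>0$, the inverse Fourier bound $\|v(\cdot,t)\|_\infty \le (2\pi)^{-N}\|\hat v(\cdot,t)\|_1$ reduces the problem to showing that the $L^1$ norm of each of the two terms is $o(t^{-N/2s})$. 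I would then perform the substitution $\xi=t^{-1/2s}\eta$, which introduces the factor $t^{-N/2s}$ outside and turns the mixed exponent $at|\xi|^2$ into $at^{1-1/s}|\eta|^2\to 0$ as $t\to\infty$, since $0<s<1$. Continuity of $\hat u_0$ at the origin (with $\hat u_0(0)=M$) then forces the first integrand to zero pointwise in $\eta$, and continuity of the exponential disposes of the second.

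The delicate step, and the one I expect to be the main obstacle, is producing a uniform-in-$t$ integrable majorant to justify dominated convergence after the rescaling. For the exponent this is supplied by the trivial estimate $e^{-(at^{1-1/s}|\eta|^2+b|\eta|^{2s})}\le e^{-b|\eta|^{2s}}$, and for the amplitude by $|\hat u_0|\le \|u_0\|_1 = M$, which crucially uses $u_0\ge 0$; both integrands are then dominated by $2M\,e^{-b|\eta|^{2s}}\in L^1(\mathbb{R}^N)$, so dominated convergence yields $t^{N/2s}\|v(\cdot,t)\|_\infty\to 0$. The argument implicitly requires $s\in(0,1)$ (so that $t^{1-1/s}\to 0$), which is exactly the regime in which the nonlocal part drives the large-time behaviour and the claimed profile $M\kf$ is natural.
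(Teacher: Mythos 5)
Your plan is essentially identical to the paper's proof: both pass to the Fourier side, bound $\|u(\cdot,t)-M\kf(\cdot,t)\|_\infty$ by the $L^1$ norm of $e^{-tb|\xi|^{2s}}(e^{-ta|\xi|^2}\hat u_0(\xi)-M)$, rescale $\xi = t^{-1/2s}z$ to pull out the factor $t^{-N/2s}$, and apply dominated convergence using the majorant $2Me^{-b|z|^{2s}}$ together with continuity of $\hat u_0$ at the origin and the fact that $t^{1-1/s}\to 0$ for $s<1$. The only cosmetic difference is that you split $\hat v$ into two terms before applying dominated convergence while the paper handles the single combined integrand; the estimates and limiting argument are the same.
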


    Our second aim is to study the blow-up phenomena for semilinear
    equations involving the local-nonlocal diffusion operator \( \om \),
    that is, the Dirichlet problem \eqref{Dirichlet}
    in the case that \( \Omega \subset \mathbb{R}^N \) is
    a bounded smooth  domain and the Cauchy problem
    \eqref{fp}
    in the case \( \Omega \equiv \mathbb{R}^N \).

    In the first case,
    we prove that the existence of blow-up depends on the size of the
    initial datum.

    \begin{teo}\label{teo:Dirichlet}
        Let \( \Omega \) be a
        bounded smooth  domain and \( u_0 \) be a bounded
        non-negative  continuous function. Then:
        \begin{enumerate}[i)]
            \item The solution of \eqref{Dirichlet} blows up in finite time,
            provided that
            \[
                \int_{\Omega} u_0(x)\psi_\Omega^{a,b}(x)\, dx >
                \left[ \lambda_1^{a,b}(\Omega) \right]^{\frac{1}{p-1}},
            \]
            where \( \lambda_1^{a,b}(\Omega) \) is the first Dirichlet
            eigenvalue of \( \om \), and \( \psi_\Omega^{a,b} \) is the
            corresponding eigenfunction.
            \item The solution is globally defined if
            \[
                \|u_0\|_{L^\infty(\Omega)} \ll 1.
            \]
        \end{enumerate}
    \end{teo}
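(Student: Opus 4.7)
For part (i), the plan is the classical Kaplan eigenfunction method, adapted to the mixed operator. Normalize the first eigenfunction $\psi := \psi_\Omega^{a,b}$ so that $\int_\Omega \psi\,dx = 1$, and introduce the weighted mass
\[
F(t) \coloneqq \int_\Omega u(x,t)\,\psi(x)\,dx.
\]
Multiplying the equation in \eqref{Dirichlet} by $\psi$ and integrating, I would treat the local piece by Green's identity, using $u=\psi=0$ on $\partial\Omega$, and the nonlocal piece by the symmetry identity $\int_{\mathbb{R}^N}(-\Delta)^s u\cdot\psi\,dx=\int_{\mathbb{R}^N}u\cdot(-\Delta)^s\psi\,dx$, valid because both $u$ and $\psi$ vanish outside $\Omega$. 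Together with the eigenvalue equation $\om\psi=-\lambda_1^{a,b}(\Omega)\psi$, this yields
\[
F'(t) = -\lambda_1^{a,b}(\Omega)\,F(t) + \int_\Omega u^p(x,t)\,\psi(x)\,dx.
\]
Jensen's inequality applied to the probability measure $\psi\,dx$ gives $\int_\Omega u^p\,\psi\,dx \ge F(t)^p$, so
\[
F'(t) \ge F(t)^p - \lambda_1^{a,b}(\Omega)\,F(t).
\]
Under the hypothesis $F(0)>[\lambda_1^{a,b}(\Omega)]^{1/(p-1)}$, the right-hand side is strictly positive at $t=0$, hence $F$ is increasing and stays above the threshold for all $t$; comparison with the explicit autonomous ODE $g'=g^p-\lambda_1^{a,b}(\Omega)\,g$ then forces $F$, and therefore $u$, to blow up in finite time.

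For part (ii), the plan is to build a time-independent supersolution using an eigenfunction in a slightly enlarged domain. Fix a bounded smooth domain $\Omega'$ with $\overline{\Omega}\subset\Omega'$, and let $(\tilde\psi,\tilde\lambda)$ be the first Dirichlet eigenpair of $-\om$ on $\Omega'$, extended by zero to $\mathbb{R}^N$, so $\tilde\psi>0$ on $\overline{\Omega}$. Set $m:=\min_{\overline{\Omega}}\tilde\psi>0$ and $M:=\|\tilde\psi\|_{L^\infty(\mathbb{R}^N)}$, and consider the candidate supersolution $\bar u(x):=\varepsilon\,\tilde\psi(x)$. Since $-\om\tilde\psi=\tilde\lambda\tilde\psi$ on $\Omega'\supset\Omega$, one computes
\[
\bar u_t - \om\bar u - \bar u^p = \tilde\lambda\,\bar u - \bar u^p \ge 0 \quad\text{in }\Omega
\]
provided $\varepsilon^{p-1}M^{p-1}\le\tilde\lambda$, and moreover $\bar u\ge 0$ on $\mathbb{R}^N\setminus\Omega$. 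If $\|u_0\|_{L^\infty(\Omega)}\le\varepsilon m$, then also $u_0\le\bar u$ in $\Omega$. Applying a comparison principle for the mixed parabolic operator $\partial_t-\om$ yields $u\le\bar u$ on $\Omega\times(0,T)$ for every $T$, so $u$ stays bounded and is globally defined.

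\textbf{Main obstacles.} The technical points are (a) justifying the integration-by-parts and symmetry identities used in (i), which requires enough regularity of $u$ and $\psi$ to handle both the local and nonlocal terms simultaneously, and (b) invoking a weak/strong comparison principle for the mixed parabolic problem in (ii). Both ingredients are by now standard in the mixed local–nonlocal literature, but must be cited or briefly verified; beyond these, the existence, positivity and simplicity of the first Dirichlet eigenpair $(\lambda_1^{a,b}(\Omega),\psi_\Omega^{a,b})$ should be recalled or referenced so that the Kaplan argument and the enlarged-domain construction are on solid ground.
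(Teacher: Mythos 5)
Your part (i) is essentially the paper's proof: it is Kaplan's eigenfunction method, using $\mathcal{E}_{a,b}(u,\psi_\Omega^{a,b})=\lambda_1^{a,b}(\Omega)\int_\Omega u\,\psi_\Omega^{a,b}\,dx$ (which is exactly the weak formulation, so your integration-by-parts/symmetry step is automatic) followed by Jensen to obtain $J'\ge J^p-\lambda_1^{a,b}(\Omega)J$, and the hypothesis puts $J(0)$ above the unstable equilibrium of the ODE. No difference worth noting there.

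Part (ii) is where you genuinely depart. The paper simply takes $\bar u=A\psi_\Omega^{a,b}$ on $\Omega$ itself as a stationary supersolution, subject to $A\psi_\Omega^{a,b}\le[\lambda_1^{a,b}(\Omega)]^{1/(p-1)}$; but since $\psi_\Omega^{a,b}$ vanishes on $\partial\Omega$, ``$\|u_0\|_{L^\infty}\ll1$'' alone does not give $u_0\le A\psi_\Omega^{a,b}$ unless $u_0$ is itself compatible with the homogeneous boundary condition. Your enlarged-domain construction $\bar u=\varepsilon\tilde\psi$ with $\overline\Omega\subset\Omega'$ removes exactly this constraint, because $\tilde\psi\ge m>0$ on $\overline\Omega$, so smallness of $\|u_0\|_{L^\infty(\Omega)}$ really does suffice for general bounded continuous $u_0$; in this sense your argument patches a small gap that the paper's terse proof leaves implicit. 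The trade-off is that your $\bar u$ is strictly positive, not zero, on $\Omega'\setminus\Omega$, so it is not in $\widetilde{H^1}(\Omega)$ and Theorem \ref{weak-comparison} as stated does not literally apply. You flag this as an obstacle, and it is a real (if minor) one: one must note that a supersolution of the interior inequality which is merely nonnegative in $\mathbb{R}^N\setminus\Omega$ can still be compared to a solution of \eqref{Dirichlet}, since with $w=\underline u-\overline u$ one still has $w\le 0$ outside $\Omega$, hence $w_+\in\widetilde{H^1}(\Omega)$ is an admissible test function and the proof of Theorem \ref{weak-comparison} goes through unchanged. With that one-line extension of the comparison principle, your part (ii) is correct and somewhat more robust than the paper's.
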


    In the second case, \( \Omega = \mathbb{R}^N \), we show that the
    Fujita exponent for our mixed operator coincides with the fractional
    Fujita exponent (when the diffusion is only given by the fractional
    Laplacian), that is, \( p_F^s = 1 + \frac{2s}{N} \).

    \begin{teo}\label{teoFujita}
        Let \( u \) be a solution of \eqref{fp}.
        \begin{enumerate}[i)]
            \item If \( 1 < p \le p_F^s \) and \( u_0 \in
            L^1(\mathbb{R}^N) \cap L^\infty(\mathbb{R}^N) \) is
            non-negative and non-trivial, then \( u \)
            blows up in finite time.
            \item If \( p > p_F^s \), then there exist non-negative
            initial data \( u_0 \in L^1(\mathbb{R}^N) \cap
            L^\infty(\mathbb{R}^N) \) such that \( u \) is a global
            solution.
        \end{enumerate}
    \end{teo}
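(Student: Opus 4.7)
The plan is to combine a Sugitani-type Duhamel iteration---which exploits Theorem~\ref{cpconvasym} in place of the scaling that $\om$ lacks---with Kaplan's eigenfunction method. Since $u^p \ge 0$, the comparison principle yields $u \ge v$ where $v$ solves the linear problem~\eqref{cp} with the same datum, and Theorem~\ref{cpconvasym} supplies the pointwise lower bound $v(x,t) \gtrsim t^{-N/(2s)}$ on the parabolic ball $|x| \lesssim t^{1/(2s)}$ for $t \ge t_0$. Substituting this into Duhamel's formula $u(t) \ge v(t) + \int_0^t K(t-\tau) u^p(\tau)\,d\tau$ (where $K$ is the semigroup of $\om$) and evaluating at $x=0$ with $\tau \approx t/2$ yields $u(0,t) \gtrsim t^{\,1 - pN/(2s)}$, which tends to $\infty$ when $p < p_F^s$; a standard logarithmic iteration handles the critical case $p = p_F^s$. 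Once $u$ has grown enough on a fixed ball $B_R$, Theorem~\ref{teo:Dirichlet}\,(i) applied on $B_R$ upgrades the unboundedness to \emph{finite-time} blow-up: as soon as $\int_{B_R} u(\cdot,T)\psi_{B_R}^{a,b}\,dx$ crosses the Kaplan threshold $[\lambda_1^{a,b}(B_R)]^{1/(p-1)}$, the solution of~\eqref{fp} must blow up by comparison with the Dirichlet problem on $B_R$.

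\textbf{Part (ii): Global existence for $p > p_F^s$.}
For small data I would run a fixed-point argument in the weighted Banach space $X$ with norm $\|u\|_X := \sup_{t>0}\bigl[(1+t)^{N/(2s)}\|u(\cdot,t)\|_{L^\infty(\mathbb{R}^N)} + \|u(\cdot,t)\|_{L^1(\mathbb{R}^N)}\bigr]$, applied to the Duhamel map $\Phi(u)(t) := K(t)u_0 + \int_0^t K(t-\tau) u^p(\tau)\,d\tau$. Two ingredients suffice: (a) the linear decay $\|K(t)u_0\|_{L^\infty} \le C (1+t)^{-N/(2s)}\|u_0\|_{L^1 \cap L^\infty}$, which follows from Theorem~\ref{cpconvasym} for large $t$ and from $L^\infty$-contractivity of $K$ for bounded $t$; and (b) the uniform bound $\int_0^t (1+t-\tau)^{-N/(2s)}(1+\tau)^{-pN/(2s)}\,d\tau \le C$, which holds precisely when $pN/(2s) > 1$, that is, $p > p_F^s$. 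Together these make $\Phi$ a contraction on a small ball of $X$, producing a global mild solution whenever $\|u_0\|_{L^1 \cap L^\infty}$ is small enough.

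\textbf{Main obstacle.}
The core difficulty throughout is the loss of scaling invariance for the mixed operator $\om$: neither self-similar sub/super-solutions nor a reduction to a single profile by rescaling is available. The approach above trades exact scaling for the \emph{asymptotic} self-similarity of Theorem~\ref{cpconvasym}, and the most delicate technical step is upgrading the uniform statement $t^{N/(2s)}\|v(\cdot,t) - M\kf(\cdot,t)\|_{L^\infty} \to 0$ to sharp pointwise two-sided kernel bounds for $\om$ comparable to those of $\kf$, valid uniformly for all large $t$ and $x$; otherwise the kernel computations underlying both the iteration in~(i) and the $L^\infty$--$L^1$ smoothing in~(ii) cannot be closed. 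I would expect such bounds to come from the Fourier/kernel analysis used to prove Theorem~\ref{cpconvasym}.
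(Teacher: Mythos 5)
Your Part (ii) is sound and essentially parallel to the paper's: both arguments rest only on the $L^1$--$L^\infty$ decay $\|K(t)u_0\|_{L^\infty}\lesssim t^{-N/(2s)}\|u_0\|_{L^1}$, which is exactly~\eqref{eq:cauchycota}; the paper builds the global supersolution $w=h(t)v(x,t+t_0)$ whose ODE for $h$ closes iff $p>p_F^s$, and your fixed-point scheme in a weighted space is the equivalent move.

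Part (i), however, contains a computational error and misses the paper's central device. The step ``$u(0,t)\gtrsim t^{1-pN/(2s)}$, which tends to $\infty$ when $p<p_F^s$'' is wrong: $1-pN/(2s)>0$ requires $p<2s/N$, not $p<1+2s/N$; whenever $N\ge 2s$ there is no such $p>1$ at all. A single Duhamel step only improves the exponent from $-N/(2s)$ to $1-pN/(2s)$, a gain of $\delta=1-(p-1)N/(2s)>0$; Sugitani's method then iterates $a_{k+1}=pa_k+1$ until the exponent crosses zero, and you neither set up the iteration nor run it. You also flag, correctly, that this route needs sharp pointwise lower bounds for $\km$ that the paper never establishes, and the critical case is left as an unspecified ``logarithmic iteration.''

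The paper sidesteps all of this. It applies Kaplan's condition~\eqref{fujitaimportante1} on $B_R(0)$ and then scales the \emph{eigenvalue problem} rather than the solution: by Remark~\ref{remark:rescalesauto}, $\lambda_1^{a,b}(B_R)=R^{-2s}\lambda_1^{a_R,b}(B_1)$ with $a_R=aR^{-2(1-s)}\to0$, so by Lemma~\ref{lema:ba0} the rescaled eigenpair converges to the purely fractional one as $R\to\infty$. Hence the Kaplan threshold $R^{N-2s/(p-1)}\bigl[\lambda_1^{a_R,b}(B_1)\bigr]^{1/(p-1)}\to0$ precisely when $p<p_F^s$, while the integral $\int_{B_R}u_0\,\psi_1^{a_R,b}(x/R)\,dx$ stays bounded below by $\psi_s(0)\int u_0>0$. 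No Duhamel iteration and no pointwise kernel estimates are required. For $p=p_F^s$ the failure of the Kaplan condition uniformly in $R$ gives an a priori $L^1$ bound, hence $\int_0^\infty\!\int u^p<\infty$, and a Mitidieri--Pohozaev test-function argument forces $\int_{t_0}^\infty\!\int u^p=0$, a contradiction. The scaling of the Dirichlet eigenproblem is the structural substitute for the lost scaling of $\om$, and it is precisely the idea your proposal is missing.
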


    Finally, we focus on the study of the blow-up rate of non-global
    solutions. As usual, we can find a lower bound for the solutions of
    \eqref{Dirichlet} with the flat solution \( U(t) = [(p-1)(T-t)]^{-
    \frac{1}{p-1}} \) due to the comparison principle. Moreover,
    we get
    \[
        \|u(\cdot,t)\|_{L^\infty(\Omega)} \ge c (T-t)^{-\frac{1}{p-1}}.
    \]
    We can obtain the upper bound with some restrictions.


    \begin{teo}\label{tasas}
        Let \(\Omega\) be a bounded smooth  domain,
         \( u_0 \) be a bounded
        non-negative  continuous function, and \( u \) be a solution
        of \eqref{Dirichlet} that blows up at a finite time \( t = T \).
        Then there is a positive constant \( C \) such that
        \[
            \|u(\cdot,t)\|_{L^\infty(\Omega)} \le
            C (T-t)^{-\frac{1}{p-1}}
        \]
        provided that either:

        i) the solution is strictly increasing in time, which is characterized in terms of the initial function $u_0$ as to satisfy
        $$
        \om u_0 +\mu u_0^p\ge 0 \quad\mbox{for some } 0<\mu<1.
        $$

        ii) Or,  \( 1 < p < p_F = 1 + \frac{2}{N} \) and
        \[
            B(u)\coloneqq\left\{ x\in\Omega\colon
                \begin{aligned}
                    \exists& (x_n,t_n)\in\Omega\times(0,T)
                    \text{ s.t. }\\
                    &(x_n,t_n)\to(x,T), u(x_n,t_n)\to\infty
                \end{aligned}
            \right\}      \subset\subset\Omega.
        \]
    \end{teo}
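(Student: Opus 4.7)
\medskip
\noindent\emph{Proof plan.} The proof splits into the two stated cases.

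For (i) the plan is to adapt the Friedman--McLeod auxiliary-function technique to the mixed operator. Introduce $J := u_t - (1-\mu)u^p$ in $\Omega\times(0,T)$. Differentiating $u_t - \om u = u^p$ in time gives $u_{tt} - \om u_t = p u^{p-1} u_t$, so that
$$
J_t - \om J = \mu p u^{p-1} u_t + (1-\mu)\,\om(u^p).
$$
Since $r\mapsto r^p$ is convex on $[0,\infty)$, one has the Kato-type inequality $\om(u^p) \ge p u^{p-1}\,\om u$ (the chain rule handles $a\Delta$, and the pointwise convexity inequality inside the singular integral handles $b(-\Delta)^s$). Substituting $\om u = u_t - u^p$ yields $J_t - \om J \ge p u^{p-1}J$. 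The hypothesis gives $J(\cdot,0) = \om u_0 + \mu u_0^p \ge 0$, and the Dirichlet condition forces $J\equiv 0$ on $\mathbb{R}^N\setminus\Omega$, so the parabolic maximum principle for $\om$ produces $J\ge 0$. The differential inequality $u_t \ge (1-\mu) u^p$ integrates, at any point where $u(x,T)=+\infty$, to $u(x,t) \le [(p-1)(1-\mu)(T-t)]^{-1/(p-1)}$; outside the blow-up set $u$ stays bounded, and taking the spatial supremum finishes (i).

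For (ii) the plan is a blow-up rescaling that decouples the local and nonlocal parts, closed by a Fujita-type Liouville argument on the limiting purely local equation. Suppose for contradiction that $(T-t)^{1/(p-1)}\|u(\cdot,t)\|_{L^\infty(\Omega)}$ is unbounded as $t\to T^-$, and pick $(x_n,t_n)$ with $t_n\to T$, $M_n := u(x_n,t_n) = \sup_{\Omega\times[0,t_n]}u$, and $M_n(T-t_n)^{1/(p-1)}\to\infty$. The hypothesis $B(u)\subset\subset\Omega$ permits assuming $\mathrm{dist}(x_n,\partial\Omega)\ge d>0$. Rescale on the \emph{local} parabolic scale,
$$
v_n(y,\tau) := M_n^{-1}\,u\bigl(x_n + M_n^{-(p-1)/2} y,\; t_n + M_n^{-(p-1)}\tau\bigr),
$$
so that $v_n(0,0) = 1$, $0\le v_n \le 1$ for $\tau\le 0$, the $(y,\tau)$-cylinder expands to $\mathbb{R}^N\times\mathbb{R}$, and a direct computation gives
$$
\partial_\tau v_n - a\Delta v_n + b\, M_n^{-(p-1)(1-s)}(-\Delta)^s v_n = v_n^p,
$$
with $M_n^{-(p-1)(1-s)}\to 0$ since $s<1$. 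Uniform parabolic regularity furnishes local Hölder bounds for $\{v_n\}$; a subsequence converges locally uniformly to $v$ with $0\le v\le 1$, $v(0,0)=1$, satisfying $v_\tau - a\Delta v = v^p$ on all of $\mathbb{R}^N\times\mathbb{R}$. Viewing $v(\cdot,0)$ as a Cauchy datum, the classical Fujita blow-up theorem (applicable since $1<p<p_F=1+\tfrac{2}{N}$ and $v(\cdot,0)$ is nonnegative and nontrivial) forces $v$ to cease to exist in finite positive $\tau$, contradicting its global existence in $\tau$.

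The principal difficulty lies in case (ii): the mixed operator $\om$ is not scale invariant, so the rescaling must be chosen so that precisely the local part survives in the limit, and one must supply parabolic regularity estimates for the rescaled equations --- uniform in $n$ and stable under the nonlocal perturbation present at every finite stage --- strong enough to pass to the limit in both the PDE and the pointwise value $v_n(0,0)=1$. The two hypotheses in (ii) play complementary roles: $B(u)\subset\subset\Omega$ ensures the rescaling windows sit well inside $\Omega$ so that the limiting solution lives on all of $\mathbb{R}^N$, while the strict inequality $p<p_F$ is exactly what the Fujita criterion requires on the local limit equation.
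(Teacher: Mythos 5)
Your part~(i) matches the paper's route: the paper observes the same Kato-type inequality $\om (u^p)\ge p u^{p-1}\om u$ (which, as you say, follows from the chain rule for the Laplacian and pointwise convexity inside the fractional integral) and then refers to the Friedman--McLeod-style argument of \cite{RaulArturo}; your computation $J_t-\om J\ge p u^{p-1}J$ with $J=u_t-(1-\mu)u^p$ and the subsequent integration are the correct way to carry that out.

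For part~(ii) your overall strategy --- rescale at the \emph{local} parabolic scale so that the nonlocal coefficient becomes $bM_n^{-(p-1)(1-s)}\to 0$, and then conclude via a Fujita-type nonexistence result for the local limit --- is exactly the paper's strategy. But there is a concrete gap in your closing step. You define $M_n=\sup_{\Omega\times[0,t_n]}u$, which gives $0\le v_n\le 1$ only for $\tau\le 0$; for $\tau>0$ you have no uniform bound on $v_n$, because $u$ can exceed $M_n$ immediately after $t_n$. Consequently you cannot obtain local H\"older bounds (hence convergence) on compact subsets of $\{\tau>0\}$, and the statement that the limit $v$ satisfies the equation ``on all of $\mathbb{R}^N\times\mathbb{R}$'' and is ``globally existent in $\tau$'' is not justified. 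What you actually obtain is a bounded nonnegative \emph{ancient} solution on $\mathbb{R}^N\times(-\infty,0]$ with $v(0,0)=1$; feeding $v(\cdot,0)$ into Fujita's theorem only shows the \emph{forward} continuation must blow up, which is not a contradiction unless you first know the forward continuation exists for all $\tau>0$ (precisely what is missing). Invoking instead a Liouville theorem for bounded ancient solutions in the range $1<p<p_F$ is a possible repair, but that is a different (and more delicate) result than Fujita's theorem, and you would need to state and justify it.

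The paper closes this gap with a doubling device that you should incorporate. It sets $M(t)=\max_{\Omega\times[0,t]}u$, picks a sequence $t_n$ with $M(t_n)=2M(t_{n-1})$, and reduces the theorem to \emph{Claim~1}: $(t_{n+1}-t_n)M(t_n)^{p-1}$ is bounded. Granting the claim, summing the geometric series $T-t_0=\sum(t_{n+1}-t_n)\le CM(t_0)^{1-p}\sum 2^{n(1-p)}$ gives the rate directly, with no passage to a limit problem. The claim is then proved by contradiction via the same rescaling you use, and the doubling construction supplies exactly the missing forward bound: $\phi_n\le 2$ on a forward $\tau$-window of length $(t_{n+1}-t_n)M_n^{p-1}$, which $\to\infty$ along the contradicting subsequence. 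Moreover, rather than passing to the limit in the PDE and invoking a Liouville theorem on $\mathbb{R}^N$, the paper uses the equi-H\"older estimate of \cite{Regularidad} only to produce a fixed initial profile $g(y)=(1-c|y|^\delta)_+\le\phi_n(\cdot,0)$ and runs Kaplan's eigenfunction argument for the Dirichlet problem on a \emph{bounded} ball $B_R(0)$ for the operator $\om[a,b_n]$; Lemma~\ref{lema:ba0} (with $b_n\to0$) makes the blow-up times $S_{R,n}$ uniformly bounded, and the condition $p<p_F$ enters through $R^{-N}\gg R^{-2/(p-1)}$, exactly as in your Fujita step. This avoids both the forward-in-time limit and the need for an ancient Liouville theorem, and it is where I would steer your write-up.
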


    \medskip

    While we were finalizing this manuscript, another proof of
    Theorem \ref{teoFujita} was presented in a paper \cite{Biagi2024},
    which recently appeared on arXiv.

    \medskip

    \subsection*{The article is organized as follows}
    In Section \ref{scp}, we consider the heat equation and
    the eigenvalue problem associate to the diffusion operator $\om$.
    In Section \ref{semilinear}, we point out some aspect about  the
    existence of solution to the smilinear equations and we prove a
    comparison principle. In Section \ref{sBUvsGE}, we study the blow-up
    phenomena. In Subsection \ref{sDP}
    we prove Theorem \ref{teo:Dirichlet} and in Subsection \ref{sfe}, we show
    that the Fujita exponent for our mixed operator coincides with the
    Fujita exponent for the fractional Laplacian, Theorem \ref{teoFujita}.
    Finally, in Section \ref{sBUR}, we prove  that the blow-up rates for
    $p<p_F$ are given by the ode $u'=u^p$.

\section{Two linear problems}\label{scp}
	In this section we consider two problems associate to the mixed operator
	$\om$. The eigenvalue problem and the evolution heat problem.
	\subsection{Eigenvalue Problem}
		In this subsection, we collect some results
		regarding the Diricihlet eigenvalues of
		$-\om.$

		\medskip

   		Let $\Omega$ be a bounded smooth domain. Let us define the spaces
		$$
			\widetilde {H^s}(\Omega)
			=\{u\in H^s(\mathbb R^N)\,:\, u=0 \mbox{ in }
			\mathbb R^N\setminus\Omega\},
			\text{ with } 0<s\le1
		$$
		and  the bilinear form
		\begin{align*}
		    \mathcal E_{a,b}(u,v)=& a \int_\Omega \nabla u(x,t)\nabla v(x,t) dx\\
		    & +\frac{b}2 \int_{\mathbb R^N}\int_{\mathbb R^N}
		    (u(y,t)-u(x,t))(v(y,t)-v(x,t))\frac{dxdy}{|y-x|^{N+2s}}.
		\end{align*}

		Now, we consider the linear eigenvalue problem
		\begin{equation}\label{eigenp}
		   \begin{cases}
		        -\om u=\lambda u &\text{in }\Omega,\\
		        u=0 &\text{in }\mathbb{R}^N\setminus\Omega,\\
		    \end{cases}
		\end{equation}
		depending on parameter $\lambda\in\mathbb{R}.$

		\begin{defi}
		    We say that $\lambda\in\mathbb{R}$ is an Dirichlet eigenvalue of $-\om,$
		    if \eqref{eigenp} admits a weak solution $u\in
		    \widetilde {H^1}(\Omega)\setminus\{0\},$ that is
		    \[
		        \mathcal E_{a,b}(u,v)=
		        \lambda\int_{\mathbb{R}^N}u(x)v(x) dx
		    \]
		    for all $v\in \widetilde {H^1}(\Omega).$
		    The function $u$ is a corresponding eigenfunction.
		\end{defi}

		Since $ \mathcal E_{a,b}$ is $\widetilde {H^1}(\Omega)-$coercive, there is
		in  an orthonormal $L^2(\Omega)-$basis $\{u_k\}_{k\in\mathbb{N}}$
		consisting of Dirichlet
		eigenfunctions of $-\om.$ Moreover, the corresponding eigenvalues
		$\{\lambda_k^{a,b}(\Omega)\}_{k\in\mathbb{N}}$
		can be arranged in a non-decreasing sequence
		\[
		    0<\lambda_1^{a,b}(\Omega)\le\lambda_2^{a,b}(\Omega)
		    \le\cdots\le\lambda_k^{a,b}(\Omega)\le\cdots
		\]
		with $\lambda_k^{a,b}(\Omega)\to\infty$ as $k\to \infty.$

		\medskip

		As usual, we can deduce that
		\[
		    \lambda_1^{a,b}(\Omega)=\min
		        \left\{
		            \mathcal E_{a,b}(u,u)\colon u\in \widetilde {H^1}(\Omega),
		            \|u\|_{L^2(\Omega)}=1
		        \right\}.
		\]
		About the first eigenvalue and the first eigenfunction,
		we have the following result.
		\begin{lema}
			Let $u$ be an eigenfunction of $-\om$ corresponding
			to $\lambda_1^{a,b}(\Omega)$. Then,
			\begin{enumerate}[i)]
				\item $u$ is a non-negative $C^{1,\beta}(\overline\Omega)$ function,
					for some there $\beta\in(0,1)$.
				\item  $\lambda_1^{a,b}(\Omega)$ is simple.
			\end{enumerate}
		\end{lema}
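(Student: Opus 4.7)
The plan is to combine the variational characterization of $\lambda_1^{a,b}(\Omega)$ with a strong maximum principle and known regularity theory for the mixed operator, mirroring the classical argument for the Laplacian. For the non-negativity in (i), I use that $\lambda_1^{a,b}(\Omega)$ is attained as the minimum of $\mathcal{E}_{a,b}(v,v)$ over $v\in\widetilde{H^1}(\Omega)$ with $\|v\|_{L^2(\Omega)}=1$. The pointwise inequalities $|\nabla|v||=|\nabla v|$ a.e.\ and $\bigl||v(x)|-|v(y)|\bigr|\le|v(x)-v(y)|$ yield $\mathcal{E}_{a,b}(|u|,|u|)\le\mathcal{E}_{a,b}(u,u)$, so $|u|$ is itself a non-trivial non-negative minimizer, and replacing $u$ by $|u|$ I may assume $u\ge 0$. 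A strong maximum principle for $-\om$ (available for mixed local--nonlocal operators in the recent literature already cited in the Introduction) then upgrades this to $u>0$ in $\Omega$.

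For the $C^{1,\beta}(\overline{\Omega})$ regularity, I would bootstrap starting from $u\in\widetilde{H^1}(\Omega)$. An $L^\infty$ bound is obtained by Moser iteration, where the nonlocal term $b(-\Delta)^s u$ can be absorbed as a lower-order perturbation since $s<1$. With $u\in L^\infty$, rewriting the equation as $-a\Delta u=\lambda_1^{a,b}(\Omega)\, u-b(-\Delta)^s u$ and iterating Calder\'on--Zygmund and Schauder estimates for $-\Delta$ together with fractional H\"older estimates for $(-\Delta)^s u$ yields interior H\"older continuity; the up-to-boundary $C^{1,\beta}$ statement for the mixed operator has been proved recently in the literature on mixed local--nonlocal operators, which I would invoke rather than re-prove.

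For the simplicity (ii), suppose $u$ and $v$ are two linearly independent first eigenfunctions. By (i), both can be taken strictly positive in $\Omega$, of class $C^{1,\beta}$ up to $\partial\Omega$, vanishing on $\partial\Omega$, and satisfying a Hopf-type strict inward normal derivative condition. Hence $u/v$ extends continuously and positively to $\overline{\Omega}$ (the boundary value at a point $x_0\in\partial\Omega$ being the ratio of the inward normal derivatives), so $c\coloneqq\min_{\overline{\Omega}}(u/v)\in(0,\infty)$ is attained at some $x_0\in\overline{\Omega}$. Then $w\coloneqq u-cv$ is a non-negative eigenfunction of $-\om$ for $\lambda_1^{a,b}(\Omega)$ that vanishes at $x_0$. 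If $x_0\in\Omega$, the strong maximum principle forces $w\equiv 0$; if $x_0\in\partial\Omega$, the definition of $c$ gives $\partial_\nu w(x_0)=0$, yet applying the strong maximum principle to $w$ itself (so $w>0$ in $\Omega$ whenever $w\not\equiv 0$) and then the Hopf lemma would give $\partial_\nu w(x_0)<0$, a contradiction. In either case $u=cv$, contradicting independence.

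The main anticipated obstacle is therefore to dispose of the strong maximum principle, the Hopf lemma, and the up-to-boundary $C^{1,\beta}$ regularity for the mixed operator $\om$: once these are in hand the eigenvalue argument is formally identical to the local case. Each of these ingredients is by now available in the mixed local--nonlocal literature, and my intention is to quote them as black boxes rather than re-derive them.
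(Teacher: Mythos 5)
Your argument for part (i) (non-negativity via $\mathcal{E}_{a,b}(|v|,|v|)\le\mathcal{E}_{a,b}(v,v)$, regularity by citation) matches the paper's. For part (ii), however, you take a genuinely different route. You run the classical Laplacian argument: strong maximum principle to get strict positivity, Hopf lemma to control normal derivatives, continuity of the ratio $u/v$ up to $\overline\Omega$, and then the minimum $c=\min u/v$ is killed by applying Hopf again to $w=u-cv$. That is sound, but it requires three nontrivial black boxes for the mixed operator (strong maximum principle, a Hopf-type lemma, and up-to-boundary $C^{1,\beta}$ regularity), and you correctly flag them as such. The paper instead uses the Servadei--Valdinoci observation that exploits the \emph{nonlocal} part: for $w\in\widetilde{H^1}(\Omega)$ with both $\{w>0\}$ and $\{w<0\}$ of positive measure one has a \emph{strict} inequality $\mathcal{E}_{a,b}(|w|,|w|)<\mathcal{E}_{a,b}(w,w)$ coming from $\bigl||w(x)|-|w(y)|\bigr|<|w(x)-w(y)|$ when $w(x)w(y)<0$; applying this to $w=u-v$ (which is itself a first eigenfunction) forces $w$ to be of one sign, and then $\|u\|_2=\|v\|_2=1$ with $u\ge v\ge0$ gives $\int(u-v)^2=2-2\int uv\le0$, hence $u=v$. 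This route avoids Hopf and the strong maximum principle altogether, needs no boundary regularity beyond what makes the variational characterization meaningful, and is available precisely \emph{because} $b>0$ makes the nonlocal term contribute a strict drop under absolute value -- a feature your proof does not use. Your approach is longer to justify rigorously (you must confirm the Hopf lemma for $\om$ in the stated generality, and that $u/v$ genuinely extends continuously along \emph{all} approaches to $\partial\Omega$, not just normal ones), but it has the virtue of also working in the purely local case $b=0$, where the paper's shortcut fails.

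Two small points worth checking in your write-up: first, that every first eigenfunction is already of one sign (this follows from the strictness above, and you need it before you can "take both positive"); second, you must verify the continuity of $u/v$ along tangential approaches to $\partial\Omega$, which does hold under $C^{1,\beta}(\overline\Omega)$ plus strict Hopf but deserves a line.
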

		\begin{proof}
			That the function $u$ is non-negative follows from the fact that for any
			$v\in \widetilde {H^1}(\Omega)$ we have that
			\[
				\mathcal E_{a,b}(|v|,|v|)\le \mathcal E_{a,b}(v,v),
			\]
			and
			\[
				\mathcal E_{a,b}(|v|,|v|)< \mathcal E_{a,b}(v,v)
			\]
			if both \(\{v<0\}\) and  \(\{v>0\}\)  have  positive measure.
			The regularity follows by \cite[Theorems 2.7 and 3.5]{Biagi2021}.

			Finally the proof that $\lambda_1^{a,b}(\Omega)$ is simple is inspired in an
			argument of \cite{ServaEigen}.
			Suppose that $u,$ $v$ are two eigenfunctions of $-\om$ corresponding to
			$\lambda_1^{a,b}(\Omega),$ such that $u\neq v.$ Without loss of generality,
		    we can assume that $u,v$ are non-negative and
		    $\|u\|_{\lp[2](\Omega)}=\|v\|_{\lp[2](\Omega)}=1.$

		    Since $u,$ $v$ are two eigenfunctions of $-\om$ corresponding to
		    $\lambda_1^{a,b}(\Omega),$ so is $w=u-v.$ Then $\{w<0\}$ or $\{w>0\}$
		    has zero measure. Then, without loss of generality, we can assume that
		    $w\ge 0$ a.e. in $\Omega.$ Therefore $u\ge v$ a.e. in $\Omega.$

		    Thus, $\|u\|_{\lp[2](\Omega)}=\|v\|_{\lp[2](\Omega)}=1$ and
		    $u\ge v$ a.e. in $\Omega,$ we have
		    \[
		        0\le\int_\Omega (u-v)^2 dx=2-2\int_\Omega u(x)v(x) dx\le 0.
		    \]
		    Then $u=v$ a.e. in $\Omega.$
		\end{proof}
%
%
%
%

	    On the other hand, it is easy to see that
    	\begin{equation}\label{cota-inf-autovalor}
    	    \lambda_1^{a,b}(\Omega)
    	    \ge \max\{a\sigma_1(\Omega), b \mu_1^s(\Omega)\}
    	\end{equation}
    	where $\sigma_1(\Omega)$ and $\mu_1^s(\Omega)$ are the first Dirichlet eigenvalue
    	of $-\Delta$ and $(-\Delta)^s$ respectively.
    	that is
    	\[
    		\begin{array}{rl}
        		\sigma_{1}(\Omega)= & \min
        	    \left\{
        	        \|\nabla u\|_{\lp[2](\Omega)}^2\colon
        	        u\in H^1_0(\Omega), \|u\|_{L^2(\Omega)}=1
        	    \right\},\\
        		\mu_1^s(\Omega)=&\min\left\{[u]_s^2
        	    \colon u\in \widetilde{H^s}(\Omega) \text{ and }
        	    \|u\|_{\lp[2](\mathbb{R}^N)}=1
        	    \right\},
    		\end{array}
    	\]
    	where
    	\[
    		[u]_s\coloneqq \left[\int_{\mathbb R^N}\int_{\mathbb R^N}
		    \frac{|u(y)-u(x)|^2}{|y-x|^{N+2s}} dxdy\right]^\frac12
    	\]
    	is the Galiardo seminorm in \(\widetilde{H^s}(\Omega).\)
    	Moreover, it is not difficult to show the  following result.
    	
    	\begin{lema}\label{lema:ba0}
        	Let $\psi^{a,b}$ be the positive eigenfunction of $-\om$ associated to
        	$\lambda_1^{a,b}(\Omega)$ normalized such that
        	$\|\psi^{a,b}\|_{\lp[2](\Omega)}=1$. Then,
        	\begin{enumerate}[i)]
				\item \(\lambda_{a,b}(\Omega)\to b \mu_{1}^s(\Omega)\) and
					\(\psi^{a,b} \to \phi\) strongly in \(\widetilde{H^s}(\Omega)\)
					as \(a\to 0^+,\) where $\phi$ is the positive eigenfunction
					of $(-\Delta)^s$ associated to
					$\mu_1^s(\Omega)$ normalized such that
					$\|\phi\|_{\lp[2](\Omega)}=1.$
				\item \(\lambda_{a,b}(\Omega)\to b \sigma_{1}(\Omega)\) and
					\(\psi^{a,b} \to \varphi\) strongly in \({H^1_0}(\Omega)\)
					as \(b\to 0^+,\) where $ \varphi$ is the positive eigenfunction
					of $-\Delta$ associated to
					$\sigma_1(\Omega)$ normalized such that
					$\| \varphi\|_{\lp[2](\Omega)}=1.$
			\end{enumerate}
  		\end{lema}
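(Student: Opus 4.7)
My plan is to prove both items by a variational argument built on the Rayleigh quotient characterisation
\[
\lambda_1^{a,b}(\Omega)=\min\left\{\mathcal{E}_{a,b}(u,u)\colon u\in\widetilde{H^1}(\Omega),\ \|u\|_{\lp[2](\Omega)}=1\right\}.
\]
In each case the scheme is: (1) upper bound on $\lambda_1^{a,b}(\Omega)$ by testing with (an admissible approximation of) the expected limit eigenfunction; (2) weak compactness of $\{\psi^{a,b}\}$ in the relevant Hilbert space, plus strong $\lp[2]$-convergence from the compact Sobolev embedding on a bounded smooth domain; (3) passage to the limit in the weak eigenvalue equation against $v\in C^\infty_c(\Omega)$, with one of the two terms in $\mathcal{E}_{a,b}$ ruled out by the smallness of the parameter $a$ or $b$; (4) identification of the weak limit with the first eigenfunction of the surviving operator via non-negativity and simplicity, then an upgrade to strong convergence via convergence of the natural seminorm plus weak lower semicontinuity in a Hilbert space.

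For item i), fix $b>0$ and let $a\to 0^+$. Since $\phi\in\widetilde{H^s}(\Omega)$ need not belong to $\widetilde{H^1}(\Omega)$, I would first approximate $\phi$ by $\phi_\epsilon\in C^\infty_c(\Omega)\subset\widetilde{H^1}(\Omega)$ (dense in $\widetilde{H^s}(\Omega)$ for smooth $\Omega$), plug $\phi_\epsilon$ into the Rayleigh quotient, and send first $a\to 0^+$ and then $\epsilon\to 0$ to obtain $\limsup_{a\to 0^+}\lambda_1^{a,b}(\Omega)\le b\mu_1^s(\Omega)$. Combined with $\tfrac{b}{2}[\psi^{a,b}]_s^2\le\lambda_1^{a,b}(\Omega)$, this gives boundedness of $\{\psi^{a,b}\}$ in $\widetilde{H^s}(\Omega)$, so a subsequence converges weakly there and strongly in $\lp[2](\Omega)$ to a non-negative normalised $\psi^*$. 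In the weak eigenvalue equation tested against $v\in C^\infty_c(\Omega)$, the local term equals $-a\int_\Omega\psi^{a,b}\Delta v\,dx=O(a)$ and vanishes; the fractional term and the right-hand side pass to the limit by weak and strong $\lp[2]$-convergence respectively. One concludes that $\psi^*$ is a non-negative normalised Dirichlet eigenfunction of $(-\Delta)^s$, so by simplicity $\psi^*=\phi$ and the limit eigenvalue equals $b\mu_1^s(\Omega)$; uniqueness of the limit upgrades subsequential to full convergence. Finally, the upper bound forces $[\psi^{a,b}]_s\to[\phi]_s$, which together with weak convergence in the Hilbert space $\widetilde{H^s}(\Omega)$ yields strong convergence.

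Item ii) follows the same blueprint but is strictly easier: $\varphi\in H^1_0(\Omega)=\widetilde{H^1}(\Omega)$ is already admissible, giving $\lambda_1^{a,b}(\Omega)\le a\|\nabla\varphi\|_{\lp[2](\Omega)}^2+\tfrac{b}{2}[\varphi]_s^2\to a\sigma_1(\Omega)$ with no approximation step; the control $a\|\nabla\psi^{a,b}\|_{\lp[2](\Omega)}^2\le\lambda_1^{a,b}(\Omega)$ bounds $\{\psi^{a,b}\}$ in $H^1_0(\Omega)$, the continuous embedding $H^1_0(\Omega)\hookrightarrow\widetilde{H^s}(\Omega)$ (for $s<1$) makes the fractional term vanish in the limit, and the remaining steps go through in parallel with $\sigma_1(\Omega)$ and $\varphi$ in place of $\mu_1^s(\Omega)$ and $\phi$. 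The main obstacle is the asymmetry in item i) between the ambient space of the pre-limit eigenfunctions, $\widetilde{H^1}(\Omega)$, and that of the limit, $\widetilde{H^s}(\Omega)$: this forces an approximation-by-density step in the upper bound and a careful separate treatment of the two seminorms of $\mathcal{E}_{a,b}$ both in the compactness argument and in the passage to the limit, whereas in item ii) the inclusion $\widetilde{H^1}\subset\widetilde{H^s}$ runs the convenient way and no such complication arises.
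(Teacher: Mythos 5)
Your proof is correct and is a fleshed-out version of what the paper sketches. The core of both arguments is the same: use the Rayleigh-quotient characterisation of $\lambda_1^{a,b}(\Omega)$ and the density of $C^\infty_c(\Omega)$ in $\widetilde{H^s}(\Omega)$ (the paper cites Grisvard for this) to get $\limsup_{a\to 0^+}\lambda_1^{a,b}(\Omega)\le b\,\mu_1^s(\Omega)$. Where the two differ is in how the matching lower bound and the eigenfunction convergence are obtained. The paper first observes that $\lambda_1^{a,b}(\Omega)$ is monotone in $a$ (note its phrasing ``decreasing in $a$'' must be a slip for ``increasing in $a$'', since $a\mapsto a\|\nabla u\|_{L^2}^2$ is increasing for each fixed test function) and uses the explicit lower bound $\lambda_1^{a,b}(\Omega)\ge b\,\mu_1^s(\Omega)$ from the inequality preceding the lemma; this gives convergence of the eigenvalue without any compactness. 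The eigenfunction convergence is then dismissed as ``easy to see''. You dispense with the monotonicity observation entirely and instead run a compactness argument: boundedness of $[\psi^{a,b}]_s$ from the upper bound on $\lambda_1^{a,b}$, weak extraction, passage to the limit in the weak eigenvalue equation (with the local term killed because it is $O(a)$ against a fixed $v\in C_c^\infty(\Omega)$), identification of the limit via non-negativity and simplicity of $\mu_1^s(\Omega)$, and finally the usual seminorm-convergence-plus-weak-convergence upgrade to strong $\widetilde{H^s}(\Omega)$ convergence. This yields the eigenvalue and the eigenfunction statements in one stroke, so it is a genuine (if standard) alternative to the paper's route, and it supplies precisely the detail the paper omits. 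One small point to keep straight: your identification step asserts that a non-negative normalised eigenfunction of $(-\Delta)^s$ must be the first one; this is true but deserves a one-line justification (e.g.\ orthogonality to the strictly positive $\phi$ would fail otherwise, or observe directly that $[\psi^*]_s^2\le\liminf[\psi^{a,b}]_s^2\le\mu_1^s(\Omega)$ forces $\psi^*$ to be a minimiser). Also, carrying the $\tfrac{b}{2}$ factor of $\mathcal{E}_{a,b}$ consistently through to the conclusion would surface the same normalisation ambiguity already present in the paper's statement of $\eqref{cota-inf-autovalor}$; this is an inconsistency you inherit from the paper rather than introduce.
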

		\begin{proof}
			We only consider the first case, the second one is similar. Notice that
			$\lambda^{a,b}_1(\Omega)$ is a decreasing function in $a$ and by
			\eqref{cota-inf-autovalor} it is bounded from bellow,
	   		\[
			    \lim_{a\to0^+}\lambda_1^{a,b}(\Omega)\ge b\mu_1^s(\Omega).
			\]

			On the other hand, given $g\in C_c^\infty(\Omega)\setminus\{0\}$  we have that
			\[
				\frac{b[g]_s^2}{\|g\|_{\lp[2](\mathbb{R}^N)}}=\lim_{a\to0^+}
			      \frac{\mathcal E_{a,b} (g,g)}{\|g\|_{\lp[2](\mathbb{R}^N)}}
			      \ge  \lim_{a\to0^+} \lambda_1^{a,b}(\Omega).
			\]
			Now, using that $ C_c^\infty(\Omega)$ is dense in
			$\widetilde{H^s}(\Omega)$ (see for instance \cite[Theorem 1.4.22]{Grisvard}),
			we get
			\[
		    	\lim_{a\to0^+}\lambda_1^{a,b}(\Omega)=b\mu_1^s(\Omega).
			\]
			Finally, it is easy to see that $\psi^{a,b}\to\phi$ strongly in
			$\widetilde{H^s}(\Omega)$
			as $a\to0^+$.
		\end{proof}

		To end this subsection we consider the particular case $\Omega=B_R(0)$.

    	\begin{lema}\label{ResAut1}
    	    If $\psi^{a,b}$ is a non-negative eigenfunction of  of $-\om$ corresponding
    	    to $\lambda_1^{a,b}(B_R(0)),$ then $\psi^{a,b}$ is spherically symmetric and
    	    radially decreasing in $B_R(0).$
    	\end{lema}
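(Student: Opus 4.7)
The plan is to use Schwarz symmetrization together with the simplicity of the first Dirichlet eigenvalue established in the previous lemma. Let $\psi = \psi^{a,b}$ be a non-negative first eigenfunction on $B_R(0)$, normalized by $\|\psi\|_{L^2(B_R(0))}=1$, and let $\psi^\ast$ denote its Schwarz symmetrization, i.e.\ the symmetric decreasing rearrangement. Since $\Omega = B_R(0)$ is itself a ball, $\psi^\ast$ is supported in $B_R(0)$, satisfies $\|\psi^\ast\|_{L^2(B_R(0))}=1$, and lies in $\widetilde{H^1}(B_R(0))$.

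The core of the argument is the monotonicity of each piece of the energy $\mathcal E_{a,b}$ under rearrangement. First, the classical Pólya--Szegő inequality gives
\[
    \int_{B_R(0)} |\nabla \psi^\ast|^2\, dx \le \int_{B_R(0)} |\nabla \psi|^2\, dx .
\]
Second, the rearrangement inequality of Almgren--Lieb for the Gagliardo seminorm yields
\[
    [\psi^\ast]_s^2 \le [\psi]_s^2,
\]
so that $\mathcal E_{a,b}(\psi^\ast,\psi^\ast)\le \mathcal E_{a,b}(\psi,\psi)=\lambda_1^{a,b}(B_R(0))$. Because $\psi^\ast$ is admissible in the variational characterization of $\lambda_1^{a,b}(B_R(0))$, the reverse inequality also holds, and hence $\psi^\ast$ is itself a non-negative first eigenfunction.

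Now the simplicity of $\lambda_1^{a,b}(B_R(0))$, proved in the previous lemma, forces $\psi = \psi^\ast$ almost everywhere, and by the $C^{1,\beta}$ regularity also stated there, the equality holds pointwise. In particular, $\psi^{a,b}$ coincides with its own symmetric decreasing rearrangement, which is precisely the assertion that it is spherically symmetric and radially non-increasing in $B_R(0)$. Strict radial monotonicity then follows from the strong maximum principle applied on spherical shells, which gives that a non-trivial non-negative first eigenfunction cannot be constant on any annulus.

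The only delicate point is the invocation of the fractional Pólya--Szegő inequality; this is the step I would be most careful about, citing the Almgren--Lieb rearrangement theorem (or Frank--Seiringer) for the Gagliardo seminorm. Apart from that, the argument is a clean combination of rearrangement and the uniqueness provided by the simplicity of $\lambda_1^{a,b}(B_R(0))$, so no new technical ingredients are required.
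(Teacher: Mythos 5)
Your proof is correct and takes essentially the same approach as the paper: symmetric decreasing rearrangement combined with the local and nonlocal P\'olya--Szeg\H{o} inequalities to show $\mathcal E_{a,b}(\psi^*,\psi^*)\le \mathcal E_{a,b}(\psi,\psi)$, then the simplicity of $\lambda_1^{a,b}(B_R(0))$ to conclude $\psi=\psi^*$. You add a remark on strict radial monotonicity via the strong maximum principle, which the paper leaves implicit; otherwise the two arguments coincide.
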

    	\begin{proof}
			Let $\psi^{a,b}_*$ the symmetric decreasing rearrangements of the function
			positive eigenfunction $\psi^{a,b}$.
			Since $\|\psi^{a,b}_*\|_{L^2(B_R(0))} =\|\psi^{a,b}\|_{L^2(B_R(0))}$ and by the
			local and non-local P\'olya-Szeg\H{o} inequalities (see for instance
			\cite{Almgren,Polya}), gives us
			$$
				\mathcal E_{a,b} (\psi^{a,b}_*,\psi^{a,b}_*)\le
				\mathcal E_{a,b} (\psi^{a,b},\psi^{a,b}).
			$$
			Therefore, the simplicity of $\lambda_1^{a,b}(B_R(0))$ implies
			$\psi^{a,b}_*=\psi^{a,b}$ and the result follows.
		\end{proof}

    	\begin{rem}\label{remark:rescalesauto}
			Let $R>0$ and $a_R= a R^{-2(1-s)}.$
			By scaling we note that, if $\psi^{a,b}_R$ is the
			non-negative eigenfunction of $-\om[a,b]$
			corresponding to $\lambda_1^{a,b}(B_R(0))$ with
			$\|\psi^{a,b}_R\|_{\lp[1](\mathbb{R}^N)}=1,$
			then
			\[
				\lambda_1^{a,b}(B_R(0))= R^{-2s}\lambda_1^{a_R,b}(B_1(0))
			 	\text{ and }\psi^{a,b}_R(x)=R^{-N}\psi^{a_R,b}_1(\tfrac{x}{R}).
			\]
   \end{rem}

	\subsection{Linear evolution problem}
		In this subsection, we consider the Cauchy
		problem for the linear heat equation \eqref{cp} and
		we prove Theorem \ref{cpconvasym}.
		
		\begin{proof}[Proof of Theorem \ref{cpconvasym}]

    	As usual, we assume $u_0\in\lp[1](\mathbb{R}^N).$ Taking the Fourier transform
    	in the spatial variable, we find
		\begin{equation}\label{cpf}
		   	 \begin{cases}
		        \hat{u}_t(\upxi,t)+(a|\upxi|^2+b|\upxi|^{2s})\hat{u}(\upxi,t) =0
		        &\text{ in } \mathbb{R}^N\times(0,\infty),\\
		        \hat{u}(\upxi,0)=\hat{u}_0(\upxi)&\text{ in } \mathbb{R}^N.
		    \end{cases}
    	\end{equation}
		Then
		\[
		    \hat{u}(\upxi,t)=\hat{u}_0(\upxi) e^{-(a|\upxi|^2+b|\upxi|^{2s})t},
		\]
		therefore the solution of \eqref{cp} is
		\[
		    u(x,t)=
		    \int_{\mathbb{R}^N}\km(x-y,t)u_0(y) dy
		\]
		where
		\[
		        \km(x,t)\coloneqq
		        \int_{\mathbb{R}^N}\kl(x-y,t)\kf(y,t) dy.
		\]
		Here $\kl$ and $\kf$ denote the heat kernel and the fractional heat
		kernel respectively, that is
		\[
		    \kl(x,t)\coloneqq \frac{e^{\frac{-|x|^2}{4at}}}{(4\pi at)^{\frac{N}2}}
		    \quad\text{ and }\quad
		    \kf(x,t)\coloneqq
		    \frac1{(2\pi b t^{\frac1{s}})^{\frac{N}2}}\int_{\mathbb{R}^N}
		    e^{i\frac{x}{bt^{1/2s}}\cdot \upxi-| \upxi|^{2s}}
		    d\upxi.
		\]
		Although the function $\kf$ does not have an explicit formula,
		it is well known that it has a self-similar form
		$$
			\kf(x,t)= t^{\frac{-N}{2s}} F(x t^{\frac{-1}{2s}}),
		$$
		where the profile $F$ is a regular bounded function,
		see for instance \cite{Garafolo}. Therefore,
%
%
$u\in C^\infty(\mathbb R^N\times (0,\infty))$ and satisfies the following decay estimate,
    \begin{equation}\label{eq:cauchycota}
        \|u(\cdot,t)\|_{L^\infty(\mathbb{R}^N)}\le\frac{C}{t^{\frac{N}{2s}}}
        \|u_0\|_{L^1(\mathbb{R}^N)}.
    \end{equation}

    Now, in order to study the asymptotic behaviour of $u$, we know that there is a positive constant $C$ depending only on $N$ such
        that
        \begin{equation}\label{eqNorinf}
            \begin{aligned}
                \|u(\cdot,t)-M\kf(\cdot,t)\|_{\lp[\infty](\mathbb{R}^N)}
                &\le C\|\hat{u}(\cdot,t)-M\hat{\kf}(\cdot,t)\|_{\lp[1](\mathbb{R}^N)}\\
                &\le C\int_{\mathbb{R}^N}e^{-b|\upxi|^{2s}t}
                \left|e^{-a|\upxi|^{2}t}\hat{u}_0(\upxi)-M\right|d\upxi\\
                &= C t^{-\frac{N}{2s}} I(t),
            \end{aligned}
        \end{equation}
        where
        $$
        I(t)= \int_{\mathbb{R}^N}e^{-b|z|^{2s}}
                \left|e^{-a|z|^{2}t^{1-\frac1{s}}}\hat{u}_0(zt^{-\frac1{2s}})-M\right|dz
        $$
        Since $\hat{u}$ is uniformly continuous,
        $\|\hat{u}_0\|_{\lp[\infty](\mathbb{R}^N)}\le  \|u_0\|_{L^1(\mathbb{R}^N)}  $, $M=\hat{u}_0(0)$ and
        $e^{-b|z|^{2s}}\in\lp[1](\mathbb{R}^N)$ we can apply dominated convergence theorem to get
        \begin{equation}\label{eqI3}
            \lim_{t\to\infty}I(t)=0.
        \end{equation}
	\end{proof}
\section{Semilinear problem}\label{semilinear}

	In this section, we highlight certain aspects concerning the existence
	and comparison of the semilinear problem in
	 two scenarios: when $\Omega
	\subset \mathbb{R}^N$ is a bounded smooth domain
	(the Dirichlet problem \eqref{Dirichlet}),
	and when $\Omega \equiv \mathbb{R}^N$
	(the Cauchy problem \eqref{fp})

	\medskip	

	We start with the case $\Omega$ is a  bounded  smooth domain. We consider
	weak solution  the problem of \eqref{Dirichlet}.

	\begin{defi}
		We say that a function
		$u\in L^2((0,T);\widetilde {H^1}(\Omega))$
		with $u_t\in L^2((0,T),H^{-1}(\Omega))$ is
		a weak supersolution (subsolution) of \eqref{Dirichlet} if
		\[
			\int_\Omega u_t \varphi dx \ge (\le) - \mathcal E_{a,b}
			(u,\varphi) +\int_\Omega u^p \varphi dx
			\qquad\forall \varphi\in L^2((0,T);\widetilde {H^1}(\Omega)),
		\]
		and $u(x,0)\ge(\le)u_0(x)$.
		We say that $u$ is a solution
		if it is both a supersolution and a subsolution.
	\end{defi}
	We note that, in view of the regularity of $\partial\Omega$,
	it is well-known that $\widetilde{H^1}(\Omega)$
	can be naturally identified with $H_0^1(\Omega)$ and
	$$
		a\|u\|_{H_0^1(\Omega)}
			\le \mathcal E_{a,b}(u,u) \le C \|u\|_{H_0^1(\Omega)}.
	$$
	Therefore, we can apply, step by step,
	the same argument as in the classical semilinear heat equation,
	to get the existence a uniqueness of weak solution.
	We also have a comparison principle.

	\begin{teo}\label{weak-comparison}
		Let $\overline u$ and $\underline u$ be a bounded weak supersolution
		and a bounded weak subsolution of \eqref{Dirichlet} respectively.
		Then, $\underline u\le \overline u$.
	\end{teo}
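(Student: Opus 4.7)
The plan is the standard energy/Gronwall strategy adapted to the mixed bilinear form $\mathcal{E}_{a,b}$. Set $w\coloneqq \underline u-\overline u$ and $w_+\coloneqq\max\{w,0\}$. Since both $\underline u,\overline u\in L^2((0,T);\widetilde{H^1}(\Omega))$, so does $w_+$ by the chain rule, and $w_+(\cdot,0)=0$ almost everywhere because $\underline u(\cdot,0)\le u_0\le \overline u(\cdot,0)$. Hence $w_+$ is an admissible non-negative test function. Subtracting the subsolution inequality from the supersolution inequality and testing against $\varphi=w_+$ yields, after applying the standard identity $\int_\Omega w_t w_+\,dx=\tfrac{1}{2}\tfrac{d}{dt}\|w_+\|_{L^2(\Omega)}^2$ (which is legitimate since $w_t\in L^2(H^{-1})$ and $w_+\in L^2(\widetilde{H^1})$),
\[
\tfrac{1}{2}\tfrac{d}{dt}\|w_+(\cdot,t)\|_{L^2(\Omega)}^2 + \mathcal{E}_{a,b}(w,w_+) \le \int_\Omega (\underline u^p-\overline u^p)\,w_+\,dx.
\]

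The first key step is to show $\mathcal{E}_{a,b}(w,w_+)\ge 0$. For the local part this is immediate: $\int_\Omega \nabla w\cdot \nabla w_+\,dx=\int_\Omega |\nabla w_+|^2\,dx\ge 0$. For the nonlocal part I would write $w=w_+-w_-$ and expand
\[
(w(y)-w(x))(w_+(y)-w_+(x)) = (w_+(y)-w_+(x))^2 + w_-(x)w_+(y)+w_-(y)w_+(x),
\]
using that $w_+w_-\equiv 0$. Each of the three summands is non-negative, so the whole double integral is non-negative and the term can be discarded from the inequality.

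The second key step is controlling the reaction term. Since $\underline u,\overline u$ are bounded by hypothesis, $s\mapsto s^p$ is Lipschitz on the range of both solutions with some constant $L=L(\|\underline u\|_\infty,\|\overline u\|_\infty,p)$; therefore on the set $\{w>0\}$,
\[
(\underline u^p-\overline u^p)\,w_+ \le L\,w_+^{\,2}.
\]
Combining with the previous display gives $\tfrac{d}{dt}\|w_+(\cdot,t)\|_{L^2(\Omega)}^2\le 2L\,\|w_+(\cdot,t)\|_{L^2(\Omega)}^2$. Gr\"onwall's lemma together with $\|w_+(\cdot,0)\|_{L^2(\Omega)}=0$ then forces $w_+\equiv 0$ on $\Omega\times(0,T)$, i.e.\ $\underline u\le \overline u$, as required.

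The step I expect to be slightly delicate, rather than genuinely hard, is the algebraic manipulation showing that the nonlocal part of $\mathcal{E}_{a,b}(w,w_+)$ is non-negative; the cross terms $w_-(x)w_+(y)+w_-(y)w_+(x)$ only appear with a favourable sign after one uses the identity $w_+w_-\equiv 0$ pointwise. Everything else is a routine transplant of the classical comparison argument for semilinear parabolic equations, using the norm equivalence $a\|u\|_{H^1_0(\Omega)}^2\le \mathcal{E}_{a,b}(u,u)\le C\|u\|_{H^1_0(\Omega)}^2$ already recorded in the paper to legitimize the integration by parts in time.
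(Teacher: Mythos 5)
Your argument is correct and follows the same route as the paper: test with $\varphi=w_+$, discard the non-negative term $\mathcal{E}_{a,b}(w,w_+)$, bound the reaction term via Lipschitz continuity of $s\mapsto s^p$ on the common $L^\infty$ range, and close with Gr\"onwall from $w_+(\cdot,0)=0$. The only difference is cosmetic: you spell out the pointwise expansion $(w(y)-w(x))(w_+(y)-w_+(x))=(w_+(y)-w_+(x))^2+w_-(x)w_+(y)+w_-(y)w_+(x)$ behind the inequality $\mathcal{E}_{a,b}(w,w_+)\ge\mathcal{E}_{a,b}(w_+,w_+)\ge 0$, which the paper simply asserts.
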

	\begin{proof}
		Let $w=\underline u- \overline u$.
		Taking as test function $\psi=w_+$ we have that
		$$
			\int_{\mathbb R^N}w_t w_+ dx  \le \mathcal E(w,w_+)
			+\int_{\mathbb R^N} (\underline u^p-\overline u^p) w_+ dx.
		$$
		Notice that, $\mathcal E(w,w_+)\ge \mathcal E(w_+,w_+)\ge 0, $ and
		$$
			\int_{\mathbb R^N}w_t w_+ dx
			=\frac12 \partial_t \int_{\mathbb R^N} (w_+)^2 dx,
		$$
		and
		$$
			\int_{\mathbb R^N} (\underline u^p-\overline u^p)
			w_+ dx\le  C \int_{\mathbb R^N} (w_+)^2 dx.
		$$
		Hence,
		$$
 			\partial_t \int_{\mathbb R^N} (w_+)^2 dx
 			\le C \int_{\mathbb R^N} (w_+)^2 dx,
 			\qquad w_+(x,0)=0
		$$
		and the result follows.
	\end{proof}

	In the case $\Omega=\mathbb R^N$, we use the concept of mail solution.

	\begin{defi} Let $X\coloneqq L^1(\mathbb R^N)\cap L^\infty(\mathbb R^N)$.
	  Given $u_0\in X$ a function $u\in C([0,T);X)$
	  is said to be a mild solution of \eqref{fp} if
		\[
			u(x,t)
			=\int_{\mathbb R^N}\km(x-y,t) u_0(y) dy
			+\int_0^t \int_{\mathbb R^N} \km(x-y,t-s)u^p(y,s) dy ds.
		\]
		a.e $x\in\mathbb{R}^N$ and for all $0<t<T$.
	\end{defi}

	\begin{teo}\label{teo-exist}
		For each $u_0\in X$ there exists a unique mild solution
		$u\in C((0,T);X)$ of \eqref{fp}.
		Moreover, $u\in C^\infty(\mathbb R^N\times (0,T))$.
	\end{teo}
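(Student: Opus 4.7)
The plan is to invoke the Banach fixed point theorem on the nonlinear map naturally suggested by the mild formulation. For parameters $T_0>0$ and $R>0$ to be chosen, I would work on the closed ball
\[
    B_{T_0,R}=\{u\in C([0,T_0];X)\colon \|u\|_{C([0,T_0];X)}\le R\},
\]
with $X$ endowed with the norm $\|u\|_X=\|u\|_{L^1(\mathbb R^N)}+\|u\|_{L^\infty(\mathbb R^N)}$, and apply Banach's theorem to the operator
\[
    \Phi(u)(x,t)=\int_{\mathbb R^N}\km(x-y,t)u_0(y)\,dy+\int_0^t\int_{\mathbb R^N}\km(x-y,t-s)u^p(y,s)\,dy\,ds.
\]
The aim is to show that, for $R=2\|u_0\|_X$ and $T_0$ sufficiently small, $\Phi$ maps $B_{T_0,R}$ into itself and is a strict contraction.

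Two preliminary ingredients underpin the estimates. The first is a pair of kernel properties: $\km>0$ and $\|\km(\cdot,t)\|_{L^1(\mathbb R^N)}=1$, both immediate from the factorization $\km=\kl\ast\kf$ produced in the proof of Theorem \ref{cpconvasym}, as each factor is a positive probability density. The second is the elementary Lipschitz bound $|u^p-v^p|\le p\max(\|u\|_{L^\infty},\|v\|_{L^\infty})^{p-1}|u-v|$. From the first and Young's convolution inequality, $f\mapsto \km(\cdot,t)\ast f$ is a contraction on $L^1$ and on $L^\infty$, hence on $X$, so the linear term of $\Phi(u)$ is bounded in $X$ by $\|u_0\|_X$. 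For the Duhamel term, the inequalities $\|u^p(\cdot,s)\|_{L^1}\le\|u\|_{L^\infty}^{p-1}\|u\|_{L^1}\le R^p$ and $\|u^p(\cdot,s)\|_{L^\infty}\le R^p$ give an $X$-bound of order $T_0R^p$, and self-mapping holds for $T_0$ small enough. Combining the same argument with the Lipschitz bound yields $\|\Phi(u)-\Phi(v)\|_{C([0,T_0];X)}\le C T_0 R^{p-1}\|u-v\|_{C([0,T_0];X)}$; shrinking $T_0$ further makes $\Phi$ a strict contraction, and Banach's theorem produces a unique local mild solution.

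This local solution is extended to a maximal interval $[0,T)$ by restarting the scheme from $u(t_n)\in X$ along an increasing sequence $t_n\to T$. For the $C^\infty$ regularity I would proceed by a bootstrap. The linear term is $C^\infty$ on $\mathbb R^N\times(0,T)$ by differentiating under the integral, using the smoothness of $\km$ together with the adequate decay inherited from $\kl$. For the nonlinear contribution I would split $\int_0^t=\int_0^{t-\varepsilon}+\int_{t-\varepsilon}^t$: the first piece is smooth since $\km(\cdot,t-s)$ is, while the second is uniformly controlled because $u^p$ is locally bounded. A standard iteration, first obtaining parabolic H\"older regularity of $u$ by means of the interior estimates for $\partial_t-\om$ from \cite{Biagi2021}, then observing that $u^p$ is H\"older, and finally feeding this back through Duhamel, upgrades $u$ to $C^\infty(\mathbb R^N\times(0,T))$.

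The main obstacle is the $C^\infty$ claim rather than existence or uniqueness, since the Duhamel kernel is singular as $s\to t$ and one cannot differentiate it blindly; the bootstrap requires preliminary H\"older control on $u$ before the algebraic nonlinearity $u^p$ can be safely differentiated, and it is precisely at this step that one must draw on the mixed-order parabolic regularity theory of $\om$ rather than on the abstract contraction scheme alone.
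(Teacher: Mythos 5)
Your proof takes essentially the same approach as the paper: a Banach fixed-point argument for the mild formulation on a ball in $C([0,T_0];X)$ of radius proportional to $\|u_0\|_X$, exploiting $\|\mathcal{P}_{a,b}(\cdot,t)\|_{L^1(\mathbb R^N)}=1$ and the interpolation $\|u^p\|_{L^1}\le\|u\|_{L^\infty}^{p-1}\|u\|_{L^1}$ to close the self-mapping and contraction estimates for small $T_0$. The paper dispatches the $C^\infty$ claim in one sentence (``the regularity of $u$ is given by the regularity of the kernel $\mathcal{P}_{a,b}$''), whereas you correctly identify the singularity of the Duhamel kernel as $s\to t$ and sketch the bootstrap through parabolic H\"older estimates needed to make that remark rigorous; this is a welcome amplification of the paper's terse statement rather than a genuinely different route.
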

	\begin{proof}
		Let $\|u_0\|_X=\|u_0\|_{L^1(\mathbb R^n)}
			+\|u_0\|_{L^\infty(\mathbb R^n)}=M$ and,
		for $T_0>0$ fixed, consider the space
		$$
			E\coloneqq\{v\in C((0,T_0);X)
			\colon\sup_{0<t<T_0}\|v(\cdot,t)\|_X\le 4M\}.
		$$
		We define the operator
		\begin{equation}\label{operator-phi}
			\Phi(v)(t)=\int_{\mathbb R^N}\km(x-y,t) u_0(y) dy
			+\int_0^t \int_{\mathbb R^N} \km(x-y,t-s)v^p(y,s) dy ds.
		\end{equation}
		We want to prove that if $T_0$ is small then $\Phi:E\to E$ is
		contractive, and thus has a unique fixed point. Let us observe that
		\begin{align*}
			\|\Phi(v)(t)\|_{L^\infty(\mathbb R^n)}
			&\le \|u_0\|_{L^\infty(\mathbb R^n)}+\int_0^t
			\|v(\tau)\|_{L^\infty(\mathbb R^n)}^p\,d\tau\\
			&\le M+cM^p T_0\le2M,
		\end{align*}
		and also
		\begin{align*}
			\|\Phi(v)(t)\|_{L^1(\mathbb R^n)}&\le \|u_0\|_{L^1(\mathbb R^n)}
			+\int_0^t
			\|v(\tau)\|_{L^\infty(\mathbb R^n)}^{p-1}
			\|v(\tau)\|_{L^1(\mathbb R^n)}\,d\tau\\
			&\le M+cM^p T_0\le2M,
		\end{align*}
		provided $T_0$ is small enough.
		Thus $\Phi(E)\subset E$. Similarly, for $v_1,v_2\in E$,
		$$
			\begin{array}{rl}
				\displaystyle\|\Phi(v_1)(t)-\Phi(v_2)(t)\|_X&\displaystyle\le 		
				\int_0^t (4M)^{p-1}\|v_1(\tau)-v_2(\tau)\|_{X}\,d\tau
				\\ [4mm]
				&\displaystyle\le c T_0 M^{p-1}\sup\limits_{0<\tau<T_0}
				\|v_1(\tau)-v_2(\tau)\|_X\\ [3mm]
				&\displaystyle\le\frac12 \|v_1-v_2\|_E,
			\end{array}
		$$
		and $\Phi$ is contractive in $E$. Finally, the regularity of the
		function $u$ is given by the regularity of the kernel $\km$.
%
	\end{proof}

	\begin{defi}
		Let   $u\in C^{2,1}_{x,t}(\mathbb R^N\times (0,T))$
		a function with an admissible growth at infinity
		$$
		\int_{\mathbb R^N} \frac{|u(x)|}{(1+|x|)^{N+2s}} dx<\infty.
		$$
		We say that it is a classical supersolution (subsolution) if
		$$
		\left\{
		\begin{array}{rl}
		u_t \ge (\le) \om u+u^p & x\in\mathbb R^N, t\in(0,T)\\
		u(x,0)\ge(\le) u_0(x) &x\in\mathbb R^N.
		\end{array}\right.
		$$
	\end{defi}

	\begin{teo}
		Let $\overline u$ and $\underline u$ be a bounded classical
		supersolution and a bounded classical subsolution
		of \eqref{fp} respectively. Then,
		$\underline u\le \overline u$.
	\end{teo}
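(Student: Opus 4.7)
The goal is to run a maximum-principle comparison, but on the unbounded domain $\mathbb{R}^N$ with a nonlocal operator, so the supremum of the difference need not be attained. My plan is to (1) reduce to a differential inequality with a mild zeroth-order term, (2) introduce a spatial penalty function that kills the problem at infinity while having bounded $\om$ image, and (3) apply the pointwise strong maximum inequality at an interior maximum.

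\emph{Reduction.} Set $w=\underline u-\overline u$. Because $\overline u$ and $\underline u$ are bounded, there is $M>0$ with $\|\overline u\|_\infty,\|\underline u\|_\infty\le M$, and the mean value theorem yields $\underline u^p-\overline u^p\le L\,w_+$ with $L\coloneqq pM^{p-1}$. Subtracting the two pointwise inequalities,
\[
w_t-\om w\le L\,w_+\qquad\text{in }\mathbb{R}^N\times(0,T),\qquad w(\cdot,0)\le 0.
\]
I would then rescale $\tilde w(x,t)\coloneqq e^{-(L+1)t}w(x,t)$, which is still bounded, vanishes above zero initially, and satisfies
\[
\tilde w_t-\om\tilde w\le -\tilde w\qquad\text{on }\{\tilde w>0\}.
\]

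\emph{Penalty at infinity.} The key obstruction is that $\sup\tilde w$ may fail to be attained. I would remedy this with the barrier $\phi(x)\coloneqq(1+|x|^2)^{\gamma/2}$ for some $0<\gamma<\min\{2,2s\}$. A direct computation shows $|\Delta\phi(x)|\lesssim(1+|x|)^{\gamma-2}$, and the standard calculation of the fractional Laplacian of a smooth function behaving like $|x|^\gamma$ at infinity (with $\gamma<2s$) yields $|\of\phi(x)|\lesssim(1+|x|)^{\gamma-2s}$; hence $|\om\phi|\le K$ for some $K=K(a,b,s,\gamma,N)$. Given $\varepsilon>0$, define
\[
v(x,t)\coloneqq\tilde w(x,t)-\varepsilon\phi(x)-2\varepsilon K t.
\]
Since $\tilde w$ is bounded and $\phi(x)\to\infty$, $v\to-\infty$ as $|x|\to\infty$ uniformly in $t\in[0,T']$ for any $T'<T$, so $v$ attains its supremum on $\mathbb{R}^N\times[0,T']$.

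\emph{Contradiction at a positive maximum.} At $t=0$ we have $v(\cdot,0)\le -\varepsilon\phi<0$. If $\sup v>0$, the maximum is then attained at some interior point $(x_0,t_0)$ with $t_0>0$, where $v_t(x_0,t_0)\ge 0$, $\Delta v(x_0,t_0)\le 0$ and (from the pointwise nonlocal max inequality, valid because $v(\cdot,t_0)$ attains its global maximum at $x_0$) $\of v(x_0,t_0)\ge 0$; hence $\om v(x_0,t_0)\le 0$. This gives $\om\tilde w(x_0,t_0)\le\varepsilon K$ and $\tilde w_t(x_0,t_0)\ge 2\varepsilon K$, while $\tilde w(x_0,t_0)>0$ permits the differential inequality, yielding $\tilde w_t(x_0,t_0)\le\om\tilde w(x_0,t_0)-\tilde w(x_0,t_0)<\varepsilon K$, contradicting $\tilde w_t(x_0,t_0)\ge 2\varepsilon K$. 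Therefore $v\le 0$ on $\mathbb{R}^N\times[0,T']$; letting $\varepsilon\to 0$ gives $\tilde w\le 0$, so $\underline u\le\overline u$. The delicate step I expect to require the most care is the verification that $\om\phi$ is globally bounded, since this is where the choice $\gamma<2s$ is essential; the rest is a routine adaptation of the Dirichlet comparison proof already carried out for \eqref{Dirichlet}.
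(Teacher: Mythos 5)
Your proposal is correct and follows essentially the same strategy as the paper: both rely on the barrier $\phi(x)=(1+|x|^2)^{\gamma/2}$ with $\gamma<2s$ (which has $\lim_{|x|\to\infty}\phi=\infty$ yet $\om\phi$ bounded) to localize the problem, and then invoke the pointwise parabolic/nonlocal maximum inequality at a first positive maximum of the penalized difference. The only difference is bookkeeping: you first factor out $e^{-(L+1)t}$ to absorb the Lipschitz term and then add a linear-in-time penalty $-2\varepsilon Kt$, whereas the paper keeps the Lipschitz term and instead uses the time-dependent penalty $\varepsilon e^{\lambda t}\phi(x)$ with $\lambda$ large, running the argument at the first time the penalized difference touches zero. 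These are interchangeable variants of the same comparison argument.
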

	\begin{proof}
		First we note that $\phi(x)=(1+|x|^2)^{\gamma/2}$
		with $\gamma<2s$ is a regular function which satisfies
		$$
			\lim_{|x|\to\infty}\phi(x)=\infty,\qquad \om \phi (x)\le K.
		$$
		Now, let us define $w=\underline u-\overline u - \varepsilon e^{\lambda t} \phi(x)$. Since $\overline u$ and $\underline u$ are bounded, we have
		$$
		w(x,t)<0 \mbox{ in } B_R^c(0)\times(0,T).
		$$
		Moreover,
		$$
			\begin{array}{rl}
				w_t-\om w\le & \underline u^p-\overline u^p -
				 \varepsilon\lambda e^{\lambda t} \phi(x)+
				\varepsilon e^{\lambda t} \om \phi \\
				 =& C|\xi|^{p-1}(\underline u-\overline u) -
				 \varepsilon\lambda e^{\lambda t} \phi(x)+
				\varepsilon e^{\lambda t} K\\
				\le&
				C w  + \varepsilon e^{\lambda t} \left(\phi(x)
				-\lambda \phi(x)+K\right).
			\end{array}
		$$
		Assume that there exists a first time $t_0$ such that $w(x_0,t_0)=0$
		for some $x_0\in B_R(0)$. Then, for $\lambda$ large enough, the
		previous inequality reads
		$$
			w_t-\om w\le \varepsilon e^{\lambda t_0}
			\left(\phi(x_0)-\lambda \phi(x_0)+K\right) <0
		$$
		which contradicts the fact that
		$w_t(x_0,t_0)\ge 0$ and $\om w(x_0,t_0)<0$.
	\end{proof}

\section{Blow-up versus global existence} \label{sBUvsGE}

	Here, we consider the semilinear problems  and prove
	Theorems \ref{teo:Dirichlet} and \ref{teoFujita}.

	\subsection{Dirichlet Problem} \label{sDP}

		In this section we consider the problem \eqref{Dirichlet}.
%

\begin{proof}[Proof of Theorem \ref{teo:Dirichlet}]
{\it i)} To obtain blow-up condition we follow the ideas of \cite{Kaplan}.
		Let	\(u\) be a weak solution of \eqref{Dirichlet} and
		$\psi_\Omega^{a,b}$ the non-negative eigenfunction of
		$-\om$ associated to the first Dirichlet eigenvalue
		$\lambda_1^{a,b}(\Omega)$ normalized such that
		$\|\psi_\Omega^{a,b}\|_{L^1(\mathbb{R}^N)}=1.$

		Since both functions $u(\cdot,t)$ and $\psi_\Omega^{a,b}$ are in
		the space $\widetilde{H^1}(\Omega)$ we can use one of them
		as a test function in the other's problem to obtain
		that the function
        \[
            J(t)=\int_{\Omega} u(x,t)\psi_\Omega^{a,b}(x) dx
        \]
        satisfies
         \begin{align*}
                        J_t(t)=&\int_{\Omega} u_t(x,t)\psi_\Omega^{a,b}(x) dx =
                        -  \mathcal E (u,\psi_\Omega^{a,b}) +
                        \int_{\Omega}
                        u^p(x,t)\psi_\Omega^{a,b}(x) dx\\
                        =& -\lambda_1^{a,b}(\Omega) J(t) + \int_{\Omega}
                        u^p(x,t)\psi_\Omega^{a,b}(x) dx.
         \end{align*}

        Then, by Jensen's inequality,
        \[
            J_t(t)\ge J^p(t)-\lambda_1^{a,b}(\Omega) J(t).
        \]
        Therefore, if
        \begin{equation}\label{fujitaimportante1}
                  \int_{\Omega} u_0(x)\psi_\Omega^{a,b}(x) dx
                  >\left[\lambda_1^{a,b}(\Omega)\right]^{\frac1{p-1}}
        \end{equation}
        then $J$ (and thus $u$) blows up in finite time.

      {\it ii)}
      We only note that  $v=A \psi_{\Omega}^{a,b}(x)$
      is a supersolution of
      \eqref{Dirichlet} if
      \[
      		A\psi_{\Omega}^{a,b}(x)\le
      		\left[\lambda_1^{a,b}(\Omega)\right]^{1/(p-1)}
      		\text{ in } \Omega,
      \]
      then for small initial data the solution are global.

\end{proof}
\subsection{Cauchy Problem. Fujita exponent}\label{sfe}
    In this section, we prove that the Fujita exponent for our mixed operator coincides
    with the fractional Fujita exponent.

    \begin{proof}[Proof of Theorem \ref{teoFujita}]
        {\it i)} We split the proof
        in two cases.

        \medskip

        \noindent{\it Case 1: $1<p<p_F^s.$} Notice that the solution of the Dirichlet
        problem \eqref{Dirichlet} is a subsolution of the Cauchy problem \eqref{cp}.
        Thus, by comparison principle, if $v$ blows up then $u$ also blows up.

        Taking $\Omega=B_R(0)$, we can rewrite the blow-up condition
        \eqref{fujitaimportante1} as
        \begin{equation}\label{fujitaimportante2}
            \int_{B_R(0)} v_0(x)\psi_1^{a_R,b}\left(\dfrac{x}R\right) dx>
            R^{N-\frac{2s}{p-1}}
            \left[\lambda_1^{a_R,b}(B_1(0))\right]^{\frac1{p-1}}
        \end{equation}
        where $a_R=R^{2(s-1)}a$, see Remark \ref{remark:rescalesauto}.

        By Lemma \ref{lema:ba0}, if $p<p_F^s$ then
        \[
            \lim_{R\to\infty}R^{N-\frac{2s}{p-1}} \left[\lambda_1^{a_R,b}(B_1(0))\right]^{\frac1{p-1}}=0.
        \]

        On the other hand, using that $\psi_1^{a,b}$ is a radially decreasing function (see Lemma \ref{ResAut1}), given $R_0<R$ we get
        \[
            \int_{B_R(0)} v_0(x)\psi_1^{a_R,b}\left(\dfrac{x}R\right) dx>
            \int_{B_{R_0}(0)} v_0(x)\psi_1^{a_R,b}\left(\dfrac{x}R_0\right) dx.
        \]
        Again, by Lemma \ref{lema:ba0}, we have
        \[
             \lim_{R\to\infty}\int_{B_R(0)} u_0(x)\psi_1^{a_R,b}\left(\dfrac{x}R\right) dx\ge
              \int_{B_{R_0}(0)} u_0(x)\psi_s\left(\dfrac{x}R_0\right) dx,
        \]
        where $\psi_s$ is the non-negative eigenfunction of $\of$ associated to
        $\mu_1(B_1(0))$ with $\|\psi_s\|_{\lp[1](\mathbb{R}^N)}=1.$ Letting
        $R_0\to \infty,$ we get
        \[
             \lim_{R\to\infty}\int_{B_R(0)} u_0(x)\psi_1^{a_R,b}\left(\dfrac{x}R\right) dx\ge
              \psi_s\left(0\right)\int_{\mathbb R^N} u_0(x) dx>0.
        \]
        Therefore \eqref{fujitaimportante1} holds for large $R.$

         \medskip

        \noindent{\it Case 2: $p=p_F^s.$}
        The proof is a modification of the arguments in \cite{MelianQuiros,Mitidieri}.

        By contradiction, suppose that there is
        a non-trivial and non-negative global solution $u$ of \eqref{fp}. Then, $u$ does not verify the blow-up condition
        \eqref{fujitaimportante2}. That is, taking $0<R_0<R$ we have
        \begin{align*}
                \int_{B_{R_0}(0)} u(x,t)\psi_1^{a_R,b}\left(\dfrac{x}{R_0}\right) dx
                  &\le \int_{B_R(0)} u(x,t)\psi_1^{a_R,b}\left(\dfrac{x}R\right) dx\\
                  &\le
                               \left[\lambda_1^{a_R,b}(B_1(0))\right]^{\frac1{p-1}}
                \quad\forall t>0.
        \end{align*}

        Now, using Lemma \ref{lema:ba0}, and passing to the limit first $R\to\infty$
        and second $R_0\to\infty,$ we get
        \begin{equation}\label{cota L1}
                \int_{\mathbb{R}^N} u(x,t) dx
                  \le
               \frac{\left[\mu_1^s(B_1(0))\right]^{\frac1{p-1}}}{\psi_s(0)}
               \quad\forall t>0,
        \end{equation}
        where $\mu_1^s(B_1(0))$ is the first Dirichlet eigenvalue of $(-\Delta)^s.$
        Then the set
        $\{u(\cdot,t)\}_{t>0}$ is bounded in $L^1(\mathbb{R}^N).$
        On the other hand, integrating the equation in \eqref{cp} in $\mathbb R^N$ we get
        $$
        \partial_t \int_{\mathbb R^N} u(x,t)dx =\int_{\mathbb R^N} u^p(x,t) dx.
        $$
        Integrating in $(0,t)$ we have
        $$
        \int_{\mathbb R^N} u(x,t)dx -\int_{\mathbb R^N} u_0(x)dx=\int_0^t \int_{\mathbb R^N} u^p(x,t)dx.
        $$
        Thanks to \eqref{cota L1}, we can take $t\to \infty$ to obtain that
        \begin{equation}\label{fujitaimportante3}
            \int_0^\infty \int_{\mathbb R^N} u^p(x,t)dx\le C.
        \end{equation}

        We know chose $\xi\in C_0^\infty(B_1(0))$ and $\eta\in C^\infty(-1,1)$ such that
        $0\le\xi,\eta\le1,$ $\xi=1$ in $B_{\tfrac12}(0),$ and
        $\eta\equiv1$ in $[0,\tfrac12).$ For fixed $t_0>0,$
        we set $\xi_R(x)=\xi(\tfrac{x}R)$ and $\eta_R(t)=\eta(\frac{t-t_0}{R^{2s}}).$
        Then
        \begin{align*}
             \int_{t_0}^\infty \int_{\mathbb{R}^N}
                 u_t(x,t)\xi_R(x)\eta_R(t) dx dt
                 =& \int_{t_0}^\infty \int_{\mathbb{R}^N}
                     \om u(x,t)\xi_R(x)\eta_R(t)dx dt\\
                  &\quad
                      +\int_{t_0}^\infty \int_{\mathbb{R}^N}
                       u^p(x,t)\xi_R(x)\eta_R(t)dx dt.
        \end{align*}
        Using that
        \[
            |\eta_R^\prime(t)|\le \dfrac1{R^{2s}}\rchi_{[t_0+\frac{R^2s}2,t_0+R^{2s}]}(t)
            \quad \forall t\ge t_0
        \]
         we get
        \begin{align*}
           &\mathcal{I}_R(t_0)\coloneqq \int_{t_0}^\infty \int_{\mathbb{R}^N}
                       u^p(x,t)\xi_R(x)\eta_R(t)dx dt\\
                &\le
                 -\int_{t_0}^\infty \int_{\mathbb{R}^N}
                 u(x,t)\xi_R(x)\eta_R^\prime (t) dx dt\\
               &\qquad  +\int_{t_0}^\infty \int_{\mathbb{R}^N}
                      u(x,t) \om\xi_R(x)
                      \eta_R(t)dx dt\\
                  &\le \frac{C}{R^{2s}}\!\! \int_{t_0+\tfrac{R^{2s}}2}^{t_0+R^{2s}}
                  \int_{B_R(0)}\!\!\!\!\!\!\!\!u(x,t)dx dt+\frac{a}{R^2}
                     \int_{t_0}^{t_0+R^{2s}}\!\!\!\! \int_{A_{R}} \!\!\!\!
                 u(x,t)|\Delta \xi(\tfrac{x}R)| dx dt \\&
                      \qquad+\frac{b}{R^{2s}}\int_{t_0}^{t_0+R^{2s}}
                      \int_{D_R}
                 u(x,t)|(-\Delta)^s \xi(\tfrac{x}R)| dx dt,
        \end{align*}
        where $A_R\coloneqq B_R(0)\setminus B_{\frac{R}2}(0)$ and
        $D_R\coloneqq \mathbb{R}^N\setminus B_{\frac{R}2}(0)$. Since the function
        $\xi\in C_0^\infty(B_1(0))$ we have
        \begin{align*}
           & \mathcal{I}_R(t_0)
                \le \frac{C}{R^{2s}}
                 \left[
                 \int_{t_0+\tfrac{R^{2s}}2}^{t_0+R^{2s}}
                  \int_{B_R(0)}\!\!\!\!\!\!\!\!u(x,t)dx dt\right.\\
                  &\qquad+\left.\frac{1}{R^{2(1-s)}}
                  \int_{t_0}^{t_0+R^{2s}}\!\!\!\! \int_{A_{R}} \!\!\!\! u(x,t) dx dt
                  +\int_{t_0}^{t_0+R^{2s}}\!\!\!\!\! \int_{D_R}
                 u(x,t) dx dt \right].
        \end{align*}
       Moreover, using H\"older inequality and $p=p_F^s$, we get
      \begin{align*}
            \mathcal{I}_R(t_0)
                &\le C  \left[
                 \left(\int_{t_0+\tfrac{R^{2s}}2}^{t_0+R^{2s}}
                  \int_{B_R(0)}\!\!\!\!\!\!\!\!u^p(x,t)dx dt\right)^{\tfrac1p}\right.\\
                  &\qquad+\frac{1}{R^{2(1-s)}}
                  \left(\int_{t_0}^{t_0+R^{2s}}\!\!\!\! \int_{A_{R}} \!\!\!\! u^p(x,t) dx dt
                  \right )^{\tfrac1p}\\
                  &\qquad \left.+\left(\int_{t_0}^{t_0+R^{2s}}\!\!\!\!\!
                  \int_{D_R}
                 u^p(x,t) dx dt \right )^{\tfrac1p} \right].
        \end{align*}
        Finally, using \eqref{fujitaimportante3} and passing to the limit as $R\to\infty$
        in the last inequality, we get
        \[
            \int_{t_0}^\infty\int_{\mathbb{R}^N} u^p(x,t) dt =0
        \]
        which is a contradiction.

       \medskip

       {\noindent {\it ii)}} As usual, by comparison principle, it is
       enough to show that there is a non-negative global super-solution of \eqref{fp}
       when $p>p_F^s.$

       Let $v_0\in \lp[1](\mathbb{R}^N)\cap \lp[\infty](\mathbb{R}^N)$ non-negative.
       We take, $v$ be the solution of the linear problem \eqref{cp}
       with initial data $ v_0,$
       \[
            w(x)=h(t) v(x,t+t_0)
       \]
       where $t_0>0$ and $h$ will be chosen later.
       Then $w$ is a super-solution of \eqref{fp} with initial data
       $u_0(x)=v(x,t_0)$
       if $h(0)=1,$ and
       \[
            \frac{h^\prime(t)}{h^{p}(t)}\ge v^{p-1}(x,t)
       \]
       for any $(x,t)\in\mathbb{R}^N\times(0,\infty).$ By
       \eqref{eq:cauchycota}, it is
       enough to take
       \[
               \frac{h^\prime(t)}{h^{p}(t)}=
               \left( C\|v_0\|_{\lp[1](\mathbb{R}^N)}
               (t+t_0)^{-\tfrac{N}{2s}}\right)^{p-1}
       \]
       Therefore
       \[
           h(t)=\left[
                    1-D t_0^{1-\frac{N}{2s}(p-1)}
                    \left(1-\left(\frac{t}{t_0}
                    +1\right)^{1-\frac{N}{2s}(p-1)}\right)
                   \right]^{\frac1{p-1}}
       \]
       where
       $D=\tfrac{2s(p-1)}{N(p-1)-2s}
       \left( C\|v_0\|_{\lp[1](\mathbb{R}^N)}\right)^{p-1}.$

       Since $p>p_F^s,$ we get
       \[
           1-\frac{N}{2s}(p-1)<0.
        \]
        Thus, taking $t_0>0$ large enough such that
        \(D t_0^{1-\frac{N}{2s}(p-1)}<1,\)
        we have that $h(t)$ is global and thus
        so is $w.$
    \end{proof}

\section{Blow-up rates}\label{sBUR}
    We now show that under certain restriction the blow-up rates are given by the natural one, that is, the rate is given by the ODE
    $$U_t=U^p.$$

    \begin{proof}[Proof of Theorem \ref{tasas}]

    Since $\om u^p\ge pu^{p-1} \om u$, the first part (increasing in time solutions) of the proof is analogous to the proof of Theorem 1.2 in \cite{RaulArturo},  so we have decided to omit it.

    For the second part we also follows the ideas given in \cite{RaulArturo}. The principal difficulty in this case is due to the fact that the mixed operator does not scale.

        Starting by defining
        \[
            M(t)\coloneqq \max\{u(x,\tau)\colon \tau\in(0,t)\}\quad\forall t\in(0,T),
        \]
        and
        \begin{align*}
            &t_0\in(0,T) \text{ (arbitrary), and } \\
            &t_n=\sup\{t\in(t_{n-1},T)\colon M(T)=2M(t_{n-1})\} \quad\forall n\in \mathbb{N}.
        \end{align*}
        Then
        \begin{itemize}
            \item $M$ is a non-decreasing function;
            \item $\{t_n\}_{n\in\mathbb{N}}$ is an increasing sequence;
            \item $\|u(\cdot,t)\|_{\lp[\infty](\Omega)}\le M(t)$ and
                $\|u(\cdot,t_n)\|_{\lp[\infty](\Omega)}= M(t_n)$
                for all $n\in\mathbb{N}.$
        \end{itemize}
        Even more, for any $n\in\mathbb{N},$ there is $x_n\in\Omega$ such that $M(t_n)=u(x_n,t_n)$
        and
        \[
            \lim_{n\to\infty}x_j=x_\infty\in B(u)
        \]
        where
        \[
            B(u)\coloneqq\left\{x\in\overline{\Omega}\colon
                \begin{aligned}
                &\exists (x_n,t_n) \in \Omega\times(0,T)
                \text{ s.t. } \\
                &(x_n,t_n)\to(x,T)
                 \text{ and }u(x_n,t_n)\to\infty
                \end{aligned}
            \right\}
        \]

        \medskip

        \noindent{\it Claim 1}: The sequences $\{(t_{n+1}-t_n)M^{p-1}(t_n)\}_{n\in\mathbb{N}}$ is bounded.

        \medskip

        Then there is a positive constant $C$ such that
        \[
            t_{n+1}-t_n\le CM^{1-p}(t_n)=C2^{n(1-p)}M^{1-p}(t_0).
        \]
        Therefore
        \[
            T-t_0\le CM^{1-p}(t_0)\sum_{n=0}^\infty 2^{n(1-p)}\le C\|u(\cdot,t_0)\|_{\lp[\infty](\Omega)}^{1-p},
        \]
        and we have the desired result.

        \medskip

        Then, we need to show the claim 1. Assume that the sequences $\{(t_{n+1}-t_n)M^{p-1}(t_n)\}_{n\in\mathbb{N}}$ is unbounded.
        Then, via a subsequence, still denote as $\{(t_{n+1}-t_n)M^{p-1}(t_n)\}_{n\in\mathbb{N}},$ we
        can assume that
        \begin{equation}
            \label{eq:mn}
            \lim_{n\to\infty} (t_{n+1}-t_n)M^{p-1}(t_n)=\infty.
        \end{equation}
        For any $n\in\mathbb{N},$ we set $M_n=M(t_n),$ $b_n=bM_n^{(1-p)(1-s)},$
        and
        \[
            \phi_n(y,\tau)=\frac{1}{M_n}u\left(M_n^{\frac{1-p}{2}}y+x_n, M_n^{1-p}
            \tau+t_n\right)
        \]
        for $(y,\tau)\in \Omega_n\times I_n$ where
        \[
            \Omega_n\coloneqq\{y\in\mathbb{R}^N\colon M_n^{\frac{1-p}{2}}y+x_n\in\Omega\}
            \text{ and }
            I_n\coloneqq(-t_nM_n^{p-1},(T-t_n)M_n^{p-1}).
        \]
        Then, for any $n\in \mathbb{N}$ we have
        \begin{equation}
            \label{eq:phin1}
            \phi_n(0,0)=1
         \end{equation}
         and
        \begin{equation}
            \label{eq:phin2}
              \phi_n(y,\tau)\le 2 \text{ in } \Omega_n\times (-t_n M_n^{p-1},(T-t_n)M_n^{p-1}).
         \end{equation}
        Furthermore,  $\phi_n$ is a solution of
        \begin{equation}
            \label{eq:phin3}
            \begin{cases}
                \phi_{n\tau}(y,\tau)-\om[a,b_n]\phi_n(y,\tau)=\phi_n^p(y,\tau) &\text{ in }\Omega_n\times I_n,\\
                \phi_n(y,\tau)=0&\text{ in }(\mathbb{R}^N\setminus\Omega_n)\times I_n,
            \end{cases}
        \end{equation}
        for all $n\in\mathbb{N}.$
        In additional, we have $I_n\to\mathbb{R}$ as $n\to \infty,$ and
        \[
            c<\text{dist}(x_n,\partial\Omega)=M_n^{1-p}\text{dist}(0,\Omega_n),
            \quad\forall n\in\mathbb{N}
        \]
        due to the blow up set of $u$ is in the interior of $\Omega.$ Therefore
        $\Omega_n\to\mathbb{R}^N$ as $n\to \infty.$ Thus, given $R>0,$
        there is $n_0=n(R)$ such that $B_R(0)\subset\Omega_n$ for any $n\ge n_0.$

           On the other han, following what was done in
            \cite{Regularidad}  and using \eqref{eq:phin2},
            we have that for any $n\ge n_0$
            there is $\delta=\delta(a,R)>0$ and $L=L(a,R)>0$ such that
            \[
                |\phi_n(z,0)-\phi_n(y,0)|\le L|z-y|^\delta\quad \forall z,y\in B_R(0).
            \]

        Thus, using \eqref{eq:phin1}, we get
        \[
            \phi_n(y,0)\ge g(y)\coloneqq (1-c|y|^\delta)_+\ge0 \quad \forall y\in B_R(0)
        \]
        for any $n\ge n_0.$
        Therefore, if $n\ge n_0,$ then $\phi_n$ is a super-solution of
        \begin{equation}
            \label{eq:auxh}
                \begin{cases}
                    h_{\tau}(y,s)-\om[a,b_n]h(y,\tau)=h^p(y,\tau)
                    	&\text{ in }B_R(0)\times J_n,\\
                    h(y,\tau)=0&\text{ in }(\mathbb{R}^N\setminus B_R(0))\times J_n,\\
                    h(y,0)=g(y)&\text{ in }B_R(0),
            \end{cases}
        \end{equation}
        where $J_n\coloneqq (0,(t_{n+1}-t_n)M_n^{p-1}).$

        \medskip

        \noindent{\it Claim 2:} For $R$ large enough there is $n_1\ge n_0$ such if $n\ge n_1$
            then the solution  of \eqref{eq:auxh}
           blows up at time $S_{R,n}<\infty.$ In additional, there is $S_R>0$ such that
           $S_{R,n}\le S_{R}$  for any $n\ge n_1.$

        \medskip

        This claim, the comparison principle and the fact that at
        $\phi_n$ is a super-solution of \eqref{eq:auxh} for all $n\ge n_1,$
        imply that $\phi_n$ blows up at time $S_n\le S_{R,n}\le S_R$ for all $n\ge n_1.$
        This is a contradiction with \eqref{eq:phin2}. Indeed, by \eqref{eq:mn}, there is
        $n_2>n_1$ such that $(t_{n+1}-t_n)M_n^{p-1}>S_R$ for all $n\ge n_2.$ Then
        $\{(t_{n+1}-t_n)M_n^{p-1}\}_{n\in\mathbb{N}}$ can not be unbounded and therefore
        the claim 1 holds.

        \medskip

        To conclude the proof, we need to show the claim 2.

        \medskip

        We can now argue almost exactly as in the proof of
        Theorem \ref{cpconvasym}, to take $R>0$ such that
        \begin{equation}
            \label{eq:sobreR}
            \mathcal{J}\coloneqq\int_{B_R(0)}
            g(y)\phi_R(y) dy >\sigma_1(B_R(0))^\frac{1}{p-1}
        \end{equation}
        where $\sigma_1(B_R(0))$ denotes the first Dirichlet eigenvalue of
        $-\Delta$ in $B_R(0)$ and $\phi_R$ is the positive eigenfunction
        of $-\Delta$ associated to $\sigma_1(B_R(0))$ such that
        $\|\phi_R\|_{\lp[1](B_R(0))}=1.$

        \medskip

        We now apply the Kaplan's argument to prove the claim 2. For any
        $n\ge n_0(R),$ we take
        $h_n$  the solution of \eqref{eq:auxh} and
        \[
            J_n(t)=\int_{B_R(0)}h_n(y,t)\psi_n(y) dy,
        \]
        where $\psi_n$ is the positive eigenfunction
        of $-\om[a,b_n]$ associated to the first eigenvalue
        $\lambda_1^{a,b_n}(B_R(0))$ such that
        $\|\psi_n\|_{\lp[1](B_R(0))}=1.$ Then
        \[
            J_{nt}=-\lambda_1^{a,b_n}(B_R(0)) J_n + \int_{B_R(0)} h_n(y)\psi_{n}(y) dy
                        \ge J_n^p-\lambda_1^{a,b_n}(B_R(0)) J_n.
        \]
        By Lemma \ref{lema:ba0} and \eqref{eq:sobreR}, there is $n_1>n_0$ such that
        \[
           J_n(0) =\int_{B_R(0)} g(y)\psi_n(y) dy>\lambda^{a,b_n}_1(B_R(0))^{\frac{1}{p-1}}
        \]
        for any $n\ge n_1.$ Then, $h_n$ blows up at time
        \[
            S_{R,n}  \le K_{R,n}\coloneqq-\frac{1}{(p-1)\lambda_1^{a,b_n}(B_R(0))}
                \ln\left(1-\lambda_1^{a,b_n}(B_R(0))J_n(0)^{1-p}\right)
        \]
        for any $n\ge n_1.$ By Lemma \ref{lema:ba0}, we get
        \[
           \lim_{n\to\infty} K_{R,n}=
           -\frac{1}{(p-1)\sigma_1(B_R(0))}
                \ln\left(1-\sigma_1(B_R(0))\mathcal{J}^{1-p}\right).
        \]
        Therefore, there is $S_R$ such that $S_{R,n}\le S_R$ for any $n\ge n_1.$
    \end{proof}

\section*{Acknowledgements}

    L.M.D.P. was partially
    supported by the European Union’s Horizon 2020 research and innovation program
    under the Marie Sklodowska-Curie grant agreement No 777822,
    (Agencia Nacional de Promoci\'on de la Investigaci\'on, el Desarrollo
    Tecnol\'ogico y la Innovaci\'on PICT-2018-3183, and
    PICT-2019-00985 and UBACYT 20020190100367.

R. F. was partially supported by the Spanish project  PID2023-146931NB-I00 and Grupo de Investigaci\'on UCM 920894.

\end{document}